\theoremstyle{plain}
\newtheorem{theorem}{Theorem}[section]
\newtheorem{lemma}[theorem]{Lemma}
\newtheorem{proposition}[theorem]{Proposition}
\newtheorem{corollary}[theorem]{Corollary}
\newtheorem{remark}[theorem]{Remark}
\newtheorem{example}[theorem]{Example}
\newtheorem{remark-question}[section]{Remark-Question}
\newcommand\frg{{\mathfrak g}}
\newcommand\frh{{\mathfrak h}}
\renewcommand\scriptsize{\@setfontsize\scriptsize{8}{9}}
\definecolor{fondo}{rgb}{0.93,0.93,0.93}
\definecolor{m}{rgb}{0.9,0,0.9}
\renewcommand*{\eqref}[1]{%
  \hyperref[{#1}]{\textup{\tagform@{\ref*{#1}}}}%
}
\begin{document}
\title[Laplacian coflow for warped $\mathrm{G}_2$-structures]{Laplacian coflow for warped $\mathrm{G}_2$-structures}

 \author{Victor Manero}
 \address[V. Manero]{Departamento de Matem\'aticas\,-\,I.U.M.A.\\
 Universidad de Zaragoza\\
 Facultad de Ciencias Humanas y de la Educaci\'on\\
 22003 Huesca, Spain}
 \email{vmanero@unizar.es}

 \author{Antonio Otal}
 \address[A. Otal and R. Villacampa]{Centro Universitario de la Defensa\,-\,I.U.M.A., Academia General
 Mili\-tar, Crta. de Huesca s/n. 50090 Zaragoza, Spain}
 \email{aotal@unizar.es}
 \email{raquelvg@unizar.es}

 \author{Raquel Villacampa}


\maketitle

\begin{abstract}
We consider the Laplacian coflow of a $\mathrm{G}_2$-structure on warped products of the form $M^7= M^6 \times_f S^1$ with $M^6$ a compact 6-manifold endowed with an $\mathrm{SU}(3)$-structure. We give an explicit reinterpretation of this flow as a set of evolution equations of the differential forms defining the $\mathrm{SU}(3)$-structure on $M^6$ and the warping function $f$. Necessary and sufficient conditions for the existence of solution for this flow are given. Finally we describe new long time solutions for this flow where the $\mathrm{SU}(3)$-structure on $M^6$ is nearly K\"ahler, symplectic half-flat or balanced.
\end{abstract}

\setcounter{tocdepth}{3} \tableofcontents

\bigskip

\section*{Introduction}

The first author to consider flows of $\mathrm{G}_2$-structures was Bryant in 2006, \cite{Br}. Concretely he considered the Laplacian flow of a $\mathrm{G}_2$-structure:
$$\frac{\partial}{\partial t} \varphi(t)= \Delta_7 \varphi(t),$$
starting from $\varphi_0$ a closed 3-form defining the G$_2$-structure. $\Delta_7$ is the corresponding Hodge Laplacian, given by $\Delta_7=\, \ast_7 \, d_7 \ast_7 d_7 - d_7 \ast_7 d_7\, \ast_7$. 

In the last years there has been a lot of fundamental works on this issue. In \cite{BX} it was proved the short time existence and uniqueness of solution on compact manifolds. The first examples of long time solutions to this flow were described in \cite{FFM}. These examples consist on non compact nilpotent Lie groups endowed with a one-parameter family of closed $\mathrm{G}_2$-structures such that satisfy the Laplacian flow equation for all $t \in (a, +\infty)$ with $a<0$.

Recent papers by Lotay and Wei \cite{LW1, LW2, LW3} derived important properties of the Laplacian flow as long time existence or convergence results. Even more recently Fino
and Raffero on \cite{FR} obtained sufficient conditions for the existence of solution of this flow on warped products of the form
$M^6 \times_f S^1$ with $M^6$ a 6-dimensional manifold endowed with an $\mathrm{SU}(3)$-structure. Recall that, if $(B,g_B)$ and $(F,g_F)$ are Riemannian
manifolds and $f$ is a non-vanishing differentiable function on $B$, then the warped product $W=B\times_f F$ consists on the product manifold
$B\times F$ endowed with the metric $g=\pi_1^*(g_B)+f^2 \pi_2^*(g_F)$ where $ \pi_1$ and $ \pi_2$ are the projections of $W$ onto $B$ and $F$
respectively. They also reinterpret the flow as a set of evolution equations on $M^6$ involving  the differential forms defining the
$\mathrm{SU}(3)$-structure and the warping function $f$. More details about the Laplacian flow of a closed $\mathrm{G}_2$-structure can be found in the reviews \cite{FFR, Lo} and the references therein. Another interesting result concerning this flow was due to Xu and Ye in \cite{XY}, where they proved long time existence and uniqueness of solution for this flow starting near a torsion free $\mathrm{G}_2$-structure.

In this work we consider the so-called Laplacian ``coflow" of $\mathrm{G}_2$-structures. This coflow was introduced by Karigiannis, McKay and Tsui in \cite{KMT} and can be considered as the analogous of the Laplacian flow of a closed $\mathrm{G}_2$-structure where the 3-form $\varphi_0$ is now considered to be coclosed instead of closed. Equivalently this flow can be stated as:
$$\frac{\partial}{\partial t} \ast_7 \varphi(t) = -\Delta_7 \ast_7 \varphi(t) ,$$
where the 4-form $\ast_7 \varphi_0$ is closed and $\ast_7$ denotes the Hodge star operator. These authors considered more natural to define this flow with a minus sign in order to make it more likely to the heat equation. In order to obtain solutions they consider 7-dimensional manifolds $M^6 \times L^1$ with $L^1=\mathbb{R}$ or $S^1$ where $M^6$ is endowed with a Calabi-Yau or a nearly K\"ahler structure. Grigorian in \cite{G} introduced the modified Laplacian coflow, which consists on a modified version of the Laplacian coflow, proving short time existence and uniqueness of solution  for this modified flow. He also derives the modified Laplacian coflow for warped $\mathrm{G}_2$-structures of the form $M^6 \times_f L^1$ obtaining solution for $M^6$ being Calabi-Yau or nearly K\"ahler. Long time solutions for the Laplacian coflow on non compact nilpotent Lie groups were described in \cite{BFF}.  In this work we present long time solutions for the coflow on warped products where the base manifolds are Lie groups endowed with metrics belonging to the Gray-Hervella classes $\mathcal W_1\oplus \mathcal W_2\oplus \mathcal W_3 $.

The paper is structured as follows. In Section 1 we give an introduction to $\mathrm{SU}(3)$ and $\mathrm{G}_2$-structures.
Section 2 is devoted to $\mathrm{G}_2$-structures of the form $M^6\times_f S^1$ ($M^6$ being compact and endowed with an SU(3)-structure) whose induced metric describes a warped product. In particular in Theorem 2.3 we give an explicit description of the torsion forms of such a $\mathrm{G}_2$-structure in terms of the torsion forms of the $\mathrm{SU}(3)$-structure on the base manifold and the warping function. In Section 3 we reinterpret the Laplacian flow and coflow of a $\mathrm{G}_2$-structure as a set of evolution equations of the $\mathrm{SU}(3)$-structure and  we describe the Laplacian coflow operator of the warped $\mathrm{G}_2$-structure by means of the torsion forms of the $\mathrm{SU}(3)$-structure and the warping function.  In particular for the Laplacian flow we reobtain the equations due to Fino and Raffero in \cite{FR}. Finally the goal of Section 4 is to obtain new examples of long time solutions of the Laplacian coflow constructed as warped products where the base manifolds are 6-dimensional  and they are endowed with nearly K\"ahler,  symplectic half-flat or balanced SU(3)-structures.

\section{$\mathrm{SU}(3)$ and $\mathrm{G}_2$-structures}

In this section we review some preliminaries concerning $\mathrm{SU}(3)$ and $\mathrm{G}_2$-structures. More concretely we present these structures, their corresponding $\mathrm{SU}(3)$ and $\mathrm{G}_2$ type decomposition of the space of differential forms and finally their torsion forms.

\subsection{$\mathrm{SU}(3)$-structures}

An $\mathrm{SU}(n)$-structure on a differentiable manifold $M^{2n}$ consists on a triple $(g, J, \Psi)$ where $(g, J)$ is an almost Hermitian structure on $M^{2n}$ and $\Psi$ is a complex $(n,0)$ form, satisfying

\begin{equation*}
(-1)^{n(n-1)/2}\Big(\frac{\imath}{2}\Big)^n \Psi \wedge \overline{\Psi}= \frac{1}{n!} \, \omega^n,
\end{equation*}
with $\overline{\Psi}$ the conjugated form of $\Psi$ and $\omega$ the K\"ahler form of the almost Hermitian structure. An $\mathrm{SU}(n)$-structure can equivalently be described by the triple $(\omega, \psi_{+}, \psi_{-})$ where $\psi_+$ and $\psi_-$ are, respectively the real and the imaginary part of the complex form $\Psi$.  In what follows we will focus on $\mathrm{SU}(3)$-structures on 6-dimensional manifolds. Note that in this case, the metric $g_{\omega,\psi_{\pm}}$ can be recovered from $(\omega, \psi_{+}, \psi_{-})$ as
$$g_{\omega,\psi_{\pm}}(X,Y) vol_6= -3 \, (\iota_X) \omega \wedge (\iota_Y \psi_+) \wedge \psi_+, $$
where $\iota$ denotes the contraction operator, $vol_{6}=\frac{1}{3!}\omega^3$  and $X, Y \in \mathfrak{X}(M^{2n})$. 

The presence of such a structure on a manifold $M^6$ can also be characterized by the existence of a local basis of 1-forms $\{e^1,\dots, e^6\}$ such that $(\omega, \psi_+, \psi_-)$ can be described as:

\begin{equation}\label{adaptedBasis}
\begin{array}{c}
\omega = e^{12}+e^{34}+e^{56},\\[10pt]
\psi_+ =e^{135}-e^{146}-e^{236}-e^{245},\quad 
\psi_- =-e^{246}+e^{235}+e^{145}+e^{136},
\end{array}
\end{equation}
where we denote, as usual in the related literature, $e^{ij}$ the wedge product $e^i \wedge e^j$ and $e^{ijk}$ the wedge product $e^{i} \wedge e^{j} \wedge e^{k}$. In the following, a basis in which the $\mathrm{SU}(3)$-structure has the expression \eqref{adaptedBasis} will be called an \emph{adapted basis}.

In \cite{CS} it is described how the intrinsic torsion of an $\mathrm{SU}(3)$-structure, namely $\tau$, lies in a space of the form
$$\tau \in \mathcal{W}_1^{\pm}\oplus \mathcal{W}_2^{\pm}\oplus \mathcal{W}_3\oplus \mathcal{W}_4\oplus \mathcal{W}_5, $$
where $\mathcal{W}_i$ denote the irreducible components under the action of the group $\mathrm{SU}(3)$. This torsion can be described by the exterior derivatives of $\omega, \psi_+$ and $\psi_-$ and also in terms of the so called torsion forms. This latter description is given in \cite{BV} where the authors consider the natural action of the group $\mathrm{SU}(3)$ on $\Omega^k(M^6)$, the space of $k$-forms on $M^6$. Thus, the different spaces of forms $\Omega^k(M^6)$ can be splitted into $\mathrm{SU}(3)$ irreducible subspaces as follows:

\medskip

\centerline{$\Omega^1(M^6)$ is irreducible,}

\medskip

\centerline{$\Omega^2(M^6)=\Omega^2_{1}(M^6) \oplus \Omega^2_{6}(M^6) \oplus \Omega^2_{8}(M^6) $, }

\medskip

with

\medskip

$\Omega^2_1(M^6)=\{ f \omega | f \in \mathcal{C}^{\infty}(M^6)\},$

\medskip

$\Omega^2_6(M^6)=\{ \ast_6 J(\eta \wedge \psi_+) | \eta \in \Omega^1(M^6) \}= \{\sigma \in \Omega^2(M^6) | J\sigma = \sigma\}$,

\medskip

$\Omega^2_8(M^6)= \{ \sigma \in \Omega^2(M^6) | \sigma \wedge \psi_+=0, \ast_6 J \sigma = -\sigma \wedge \omega \}= \{\sigma \in \Omega^2(M^6)|J\sigma = -\sigma, \sigma \wedge \omega^2=0\}$;

\medskip

and

\medskip

\centerline{$\Omega^3(M^6)=\Omega^3_{+}(M^6) \oplus \Omega^3_{-}(M^6) \oplus \Omega^3_{6}(M^6) \oplus \Omega^3_{12}(M^6)$, }

\medskip

with

\begin{equation*}
\begin{array}{ll}
\Omega^3_{+}(M^6) = \{f \psi_+ |\,  f \in \mathcal{C}^{\infty}(M^6)\},&\quad \Omega^3_{6}(M^6)=\{\eta \wedge \omega |\, \eta \in \Omega^1(M^6)\}=\{\gamma \in \Omega^3(M^6)| \ast_6 J \gamma=\gamma\},\\[7pt]
\Omega^3_{-}(M^6) = \{f \psi_- |\,  f \in \mathcal{C}^{\infty}(M^6)\},&\quad  \Omega^3_{12}(M^6) = \{\gamma \in \Omega^3(M^6) |\, \gamma \wedge \omega = 0, \gamma \wedge \psi_{\pm}=0\},
\end{array}
\end{equation*}

\noindent
where $\ast_6$ denotes the Hodge star operator associated to the induced metric $g_{\omega, \psi_{\pm}}$ and the volume form $vol_{6}$. Notice that $\Omega^k_d(M^6)$ denotes the $\mathrm{SU}(3)$-irreducible space of $k$-forms having dimension $d$. Decompositions  of the spaces of $k$-forms for $k=4,5$ and $6$ need not to be detailled since they can be achieved via the Hodge star operator, $\ast_6 \Omega^k_d(M^6)= \Omega^{6-k}_d(M^6)$.

With all these previous descriptions the derivatives of $\omega, \psi_+$ and $\psi_-$ can be decomposed into summands belonging to the $\mathrm{SU}(3)$-invariant spaces as follows (see \cite{BV} for details):
\begin{equation}\label{SU3-str}
\begin{array}{lll}
d\omega &=& \frac{-3}{2}\sigma_0\psi_+ + \frac{3}{2}\pi_0\psi_- +\nu_1\wedge \omega+ \nu_3,\\[5pt]
d\psi_+ &=&\pi_0\,\omega^2 + \pi_1\wedge \psi_+ - \pi_2\wedge \omega,\\[5pt]
d\psi_- &=&\sigma_0\,\omega^2 + \pi_1\wedge \psi_- - \sigma_2\wedge \omega,
\end{array}
\end{equation}
where $\sigma_0, \pi_0 \in \mathcal{C}^{\infty}(M^6), \pi_1, \nu_1 \in \Omega^1(M^6), \pi_2, \sigma_2 \in \Omega^2_{8}(M^6)$ and $\nu_3\in \Omega^3_{12}(M^6)$ are the \emph{torsion forms} of the $\mathrm{SU}(3)$-structure.
\bigskip

Some classes of $\mathrm{SU}(3)$-structures that are useful for our purposes are given in Table~\ref{tablaSU3}.

\begin{table}[h!]
\renewcommand{\arraystretch}{1.3}
\begin{center}
\begin{tabular}{|c|c|l|}
\hline
Class& Non-vanishing torsion forms & Structure \\
\hline
$\{0\}$& -- & Calabi-Yau \\
\hline
$\mathcal W_1^-$& $\sigma_0 $& Nearly K\"ahler \\
\hline
$\mathcal W_2^-$& $\sigma_2 $& Symplectic half-flat \\
\hline
$\mathcal W_3 $& $\nu_3 $& Balanced \\
\hline
\end{tabular}
\medskip
\caption{Some classes of $\mathrm{SU}(3)$-structures}
\label{tablaSU3}
\end{center}
\end{table}

\subsection{$\mathrm{G}_2$-structures}

A $\mathrm{G}_2$-structure on a 7-dimensional differentiable manifold consists on a three form $\varphi$ such that it defines a metric, namely $g_{\varphi}$, a volume form $vol_{7}$ and a 2-fold vector cross product, see \cite{FG,Hi}. 
The metric $g_{\varphi}$ can be recovered from $\varphi$ as
$$g_{\varphi}(X,Y) vol_{7}= \frac16 \, (\iota_X \varphi) \wedge (\iota_Y \varphi) \wedge \varphi, $$
with $X, Y \in \mathfrak{X}(M^{7})$. The presence of such structure on a manifold $M^7$ can be characterized by the existence of an adapted basis, i.e. a local basis of 1-forms $\{e^1,\dots, e^7\}$ such that $\varphi$ can be described as:
\begin{equation*}
\varphi =e^{127}+e^{347}+e^{567}+e^{135}-e^{146}-e^{236}-e^{245}.
\end{equation*}


Concerning the intrinsic torsion of a $\mathrm{G}_2$-structure, namely $\mathcal{T}$, in \cite{FG} it is described how this torsion lies in a space of the form
$$\mathcal{T}\in \mathcal{X}_1 \oplus \mathcal{X}_2 \oplus \mathcal{X}_3\oplus \mathcal{X}_4, $$
where $\mathcal{X}_i$ denotes the irreducible components under the action of the group $\mathrm{G}_2$. Thus, we can distinguish between 16 different classes of $\mathrm{G}_2$-structures, the so-called \emph{Fern\'andez-Gray classes}, which can be characterized by the behavior of the exterior derivative of $\varphi$ and $\ast_7 \varphi$ where $\ast_7$ is the Hodge star operator induced by the $\mathrm{G}_2$-structure. In \cite{Br} it is given a description of the derivatives of $\varphi$ and $\ast_7 \varphi$ as summands belonging to the different $\mathrm{G}_2$-invariant spaces $\mathcal{X}_i$.

To obtain this description it is considered the natural action of the group $\mathrm{G}_2$ on $\Omega^k(M^7)$.  
Thus, the different spaces of forms $\Omega^k(M^7)$ can be splitted into $\mathrm{G}_2$-irreducible subspaces as follows:

 \medskip

\centerline{$\Omega^1(M^7)$ is irreducible,}

\medskip

\centerline{$\Omega^2(M^7)=\Omega^2_{7}(M^7) \oplus \Omega^2_{14}(M^7) $, }

\medskip

with

\medskip

$\Omega^2_7(M^7)=\{ \ast_7(\eta \wedge \ast_7 \varphi)  |\, \eta \in \Omega^1(M^7)\}= \{ \sigma \in \Omega^2(M^7) | \sigma \wedge \varphi = 2 \ast_7 \sigma \}$,

\medskip

$\Omega^2_{14}(M^7)= \{ \sigma \in \Omega^2(M^7) |\, \sigma \wedge \varphi = - \ast_7 \sigma \}$;

\medskip

and

\medskip

\centerline{$\Omega^3(M^7)=\Omega^3_{1}(M^7) \oplus \Omega^3_{7}(M^7) \oplus \Omega^3_{27}(M^7) $, }

\medskip

with

\begin{equation*}
\begin{array}{ll}
\Omega^3_{1}(M^7) = \{f \varphi |\, f \in \mathcal{C}^{\infty}(M^7)\},&\quad \Omega^3_{27}(M^7)=\{  \gamma \in \Omega^3(M^7) |\, \gamma \wedge \varphi = \gamma \wedge \ast_7 \varphi=0\}.\\[10pt]
\Omega^3_{7}(M^7) = \{\ast_7(\eta \wedge \varphi) |\,  \eta \in \Omega^1(M^7)\},&
\end{array}
\end{equation*}

Similarly to the previous case, $\Omega^k_d(M^7)$ denotes the $\mathrm{G}_2$-irreducible space of $k$-forms which has dimension~$d$.  For the rest of dimensions ($k=4,5,6$ and $7$) use the relation: $\ast_7 \Omega^k_d(M^7)= \Omega^{7-k}_d(M^7)$.

Thus, the derivatives of $\varphi$ and $\ast_7 \varphi$ can be decomposed into summands belonging to the $\mathrm{G}_2$-invariant spaces as follows (see \cite{Br}):
\begin{equation}\label{tor}
d\,\varphi = \tau_0 \ast_7 \varphi + 3 \tau_1 \wedge \varphi + \ast_7 \tau_3,\qquad 
d(\ast_7\varphi) =4 \tau_1 \ast_7 \varphi +  \tau_2 \wedge \varphi,
\end{equation}
where $\tau_0 \in \mathcal{C}^{\infty}(M^7), \tau_1  \in \Omega^1(M^7), \tau_2 \in \Omega^2_{14}(M^7)$ and $\tau_3 \in \Omega^3_{27}(M^7)$ are the torsion forms.
\bigskip

In particular:
\begin{equation}\label{TF-7d}
\begin{array}{ll}
\tau_0= \dfrac17\ast_7(d\varphi \wedge\varphi),\quad & \qquad 
\tau_2 = -\ast_7 d\ast_7\varphi + 4\ast_7(\tau_1\wedge \ast_7\varphi),\\[5pt]
\tau_1= \dfrac{-1}{12}\ast_7(\ast_7 d\varphi \wedge\varphi),\quad &\qquad 
\tau_3 = \ast_7 d\varphi - \tau_0\varphi - 3\ast_7(\tau_1\wedge\varphi).\\
\end{array}
\end{equation}
%

The principal Fern\'andez-Gray classes are given in Table~\ref{tablaG2}:

\begin{table}[h!]
\renewcommand{\arraystretch}{1.3}
\begin{center}
\begin{tabular}{|c|c|l|}
\hline
Class& Non-vanishing torsion forms & Structure \\
\hline
$\mathcal P$& $-$& Parallel\\
\hline
$\mathcal X_1$& $\tau_0$& Nearly Parallel\\
\hline
$\mathcal X_2$& $\tau_2$& Closed\\
\hline
$\mathcal X_3$& $\tau_3$& Coclosed of pure type\\
\hline
$\mathcal X_4$& $\tau_1$& Locally conformal parallel\\
\hline
$\mathcal X_1 \oplus \mathcal X_3$& $\tau_0, \tau_3$& Coclosed\\
\hline
\end{tabular}
\medskip
\caption{Some classes of $\mathrm{G}_2$-structures}
\label{tablaG2}
\end{center}
\end{table}


\section{Warped $\mathrm{G}_2$-structures}

Consider two Riemannian manifolds, namely $(F,g_F)$ and $(B, g_B)$, and $f$ a non-vanishing real differentiable function on $B$. The warped product, denoted as $B \times_f F$, consists on the product manifold $$W=B \times F$$ endowed with the metric $g_f=\pi_1^*(g_B)+f^2 \pi_2^*(g_F)$
with $ \pi_1$ and $ \pi_2$ being the projections of $W$ onto $B$ and $F$ respectively.

Starting from an $\mathrm{SU}(3)$-structure $(\omega, \psi_{\pm})$ over $M^6$, and considering a function $f \in \mathcal{C}^{\infty}(M^6)$ it is possible to construct a $\mathrm{G}_2$-structure $\varphi$ over $M^7 = M^6\times S^1$ such that:
\begin{equation}\label{phi-warped}
\varphi = f\,\omega\wedge ds + (\alpha\,\psi_+ - \beta\,\psi_-),
\end{equation}
 with $s$ the coordinate on $S^1$ and $\alpha,\, \beta\in\mathbb R$ satisfying $\alpha^2+\beta^2=1$. Thus, the metric and the volume form of this $\mathrm{G}_2$-structure are given in terms of the $\mathrm{SU}(3)$-structure by:
$$g_{\varphi}= g_{\omega, \psi_{\pm}}+f^2 ds^2,\qquad vol_7=f vol_6 \wedge ds.$$ Observe that $g_{\varphi} = g_f$, so $M^7$ is in fact a warped product.
In what follows we will call \emph{warped $\mathrm{G}_2$-structure} to this $\mathrm{G}_2$-structure~\eqref{phi-warped}.
\begin{remark}
If we consider the pair $(\alpha,\beta)=(1,0)$, this definition of warped $\mathrm{G}_2$-structure is exactly the one already given in \cite{FR}.
\end{remark}
The metrics $g_{\omega,\psi_{\pm}}$ and $ g_\varphi$ on the base manifold $M^6$ and the warped product $M^6\times_f S^1$ respectively define two star operators $\ast_6$ and $\ast_7$ related by the following:
\begin{lemma}[Lemma 3.2, \cite{FR}]\label{lemma}
Let  $\eta \in \Omega^k(M^6)$ be a differential $k$-form on $M^6$, and let $\ast_6$ and $\ast_7$ be the Hodge star operator determined by the $\mathrm{SU}(3)$-structure and the warped $\mathrm{G}_2$-structure, respectively. Then
\begin{eqnarray*}
\ast_7 \eta &=& f \ast_6 \eta \wedge ds,\\[5pt]
\ast_7 (\eta \wedge ds)& =&(-1)^k f^{-1} \ast_6 \eta.
\end{eqnarray*}
\end{lemma}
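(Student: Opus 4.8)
The plan is to reduce both identities to a computation on an adapted orthonormal coframe, where the Hodge star acts combinatorially on monomials and the only real work is bookkeeping signs. First I would fix a local $g_{\omega,\psi_{\pm}}$-orthonormal coframe $\{e^1,\dots,e^6\}$ on $M^6$; such a coframe exists locally and both sides of the claimed identities are tensorial and linear over functions, so it suffices to verify them on such a coframe. Since $f$ is a function on the base $M^6$ and the warped metric is $g_\varphi = g_{\omega,\psi_{\pm}} + f^2\,ds^2$, setting $e^7 := f\,ds$ produces a local $g_\varphi$-orthonormal coframe $\{e^1,\dots,e^6,e^7\}$ on $M^7$, with $vol_7 = e^{1\cdots 7} = f\,vol_6\wedge ds$, consistent with the volume form recorded above. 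By linearity it then suffices to check the two formulas when $\eta = e^I := e^{i_1}\wedge\cdots\wedge e^{i_k}$ is a basis monomial with $I = \{i_1<\cdots<i_k\}\subseteq\{1,\dots,6\}$.

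Next I would apply the monomial rule for the Hodge star on an orthonormal coframe, $\ast\,e^I = \mathrm{sgn}(\sigma)\,e^{I^c}$, where $I^c$ is the complementary index set and $\sigma$ is the permutation sending $(1,2,\dots,n)$ to $(I, I^c)$. For the first identity, since $7\notin I$, the complement of $I$ inside $\{1,\dots,7\}$ is $I^c\cup\{7\}$ with $7$ sitting at the very end, and appending the fixed index $7$ to both the source and the target of $\sigma$ leaves its sign unchanged. Hence $\ast_7 e^I = \mathrm{sgn}(\sigma)\,e^{I^c}\wedge e^7 = (\ast_6 e^I)\wedge e^7 = f\,(\ast_6 e^I)\wedge ds$, which is exactly the first formula. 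For the second identity I would write $\eta\wedge ds = e^I \wedge f^{-1}e^7 = f^{-1}\,e^{I\cup\{7\}}$ and compute $\ast_7$ of the monomial $e^{I\cup\{7\}}$, whose complement in $\{1,\dots,7\}$ is $I^c$. Relative to the previous case, putting the indices into the standard order now requires sliding the index $7$ past the $6-k$ entries of $I^c$, contributing a sign $(-1)^{6-k}=(-1)^k$. This gives $\ast_7(e^{I\cup\{7\}}) = (-1)^k\,\ast_6 e^I$, and therefore $\ast_7(\eta\wedge ds) = (-1)^k f^{-1}\ast_6 e^I$, the second formula.

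The only genuinely delicate point, and the place where a sign error would most easily creep in, is this bookkeeping: correctly comparing the permutation signs for $\ast_6$ and $\ast_7$, and accounting for the parity $(-1)^{6-k}$ incurred when moving the extra index $7$ into position. Everything else—orthonormality of the adapted coframe, linearity over functions, and passing from monomials back to an arbitrary $\eta \in \Omega^k(M^6)$—is routine. One could alternatively avoid monomials and verify both identities directly from the defining relation $\alpha\wedge\ast\beta = \langle\alpha,\beta\rangle\,vol$, but decomposing forms on $M^7$ into pieces with and without a $ds$ factor ultimately reproduces the same sign computation, so the coframe argument is the most economical route.
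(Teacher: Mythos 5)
Your proof is correct: reducing to a positively oriented $g_{\omega,\psi_\pm}$-orthonormal coframe with $e^7 = f\,ds$ (so that $e^{1\cdots 7} = f\,vol_6\wedge ds = vol_7$), and then tracking the monomial signs---in particular the $(-1)^{6-k}=(-1)^k$ factor from sliding $e^7$ past $e^{I^c}$---is exactly the standard verification, and every step checks out. Note that the paper itself gives no proof of this statement: it is imported verbatim as Lemma 3.2 of \cite{FR}, so your coframe computation simply supplies the short argument that the cited source is relied upon for.
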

Hence from \eqref{phi-warped} and the previous lemma it can be checked that
 \begin{equation}\label{star-phi-warped}
 \ast_7 \varphi=  \frac12 \omega^2 + f\,(\alpha\,\psi_- + \beta\,\psi_+)\wedge ds.
 \end{equation}

\begin{remark}
The key idea of this section is to study how the $\mathrm{G}_2$-geometry of the warped product $M^6\times_fS^1$ forces
conditions on the $\mathrm{SU}(3)$-geometry of the base $M^6$.  Having this idea in mind, we are going to describe the torsion forms~\eqref{TF-7d} of the warped $\mathrm{G}_2$-structure in terms of the torsion forms of the $\mathrm{SU}(3)$-structure and the warping function.
\end{remark}

In the spirit of \cite[Theorem 3.4]{MU} we can prove:

\begin{theorem}
Let $(M^6, \omega, \psi_{\pm})$ be an $\mathrm{SU}(3)$-manifold with torsion forms $\pi_0, \sigma_0, \pi_1, \nu_1, \pi_2, \sigma_2$ and $\nu_3$.  Then, the torsion forms~\eqref{TF-7d} of a warped $\mathrm{G}_2$-manifold $(M^7 = M^6\times_f S^1,\,\varphi)$ are given by
\begin{equation}\label{torsiones}
\begin{array}{lll}
\tau_0 &=& \frac{12}{7} (\alpha\pi_0-\beta\sigma_0),\\
&& \\
\tau_1 &=&  
\frac{1}{2} (\alpha\sigma_0+\beta\pi_0) fds + \frac16 \eta_1 ,\\
&& \\
\tau_2 
  &=& -\alpha\sigma_2-\beta\pi_2+\frac{f}{3}\ast_6\left(\eta_2\wedge\omega^2\right)\wedge ds -\frac13\ast_6\left(\eta_2\wedge(\alpha\psi_-+\beta\psi+)\right),\\
  && \\
\tau_3&=& 
\left[\frac27(\alpha\pi_0-\beta\sigma_0)f\omega-\frac{f}{2}\ast_6\left(\eta_3\wedge(\alpha\psi_+-\beta\psi_-)\right)+f(\alpha\pi_2-\beta\sigma_2)\right]\wedge ds-\frac12\ast_6(\eta_3\wedge\omega)-\\[1em]
&&\frac{3}{14}(\alpha\pi_0-\beta\sigma_0)(\alpha\psi_+-\beta\psi_-)-\ast_6\nu_3,
\end{array}
\end{equation}
where $\eta_i$ are the following 1-forms: $$\eta_1 = \frac{1}{f} d_6 f + \pi_1 +\nu_1,\quad \eta_2 = \frac{1}{f} d_6f+\pi_1-2\nu_1,\quad \eta_3 = \frac{1}{f} d_6f- \pi_1+\nu_1.$$
\end{theorem}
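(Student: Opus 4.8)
The plan is to reduce everything to the three differential-form quantities $d\varphi$, $\ast_7 d\varphi$ and $d(\ast_7\varphi)$ on $M^7$, express each purely in terms of the $\mathrm{SU}(3)$-data on $M^6$, and then feed them into the closed formulas \eqref{TF-7d}. First I would differentiate \eqref{phi-warped}: since $f,\omega,\psi_\pm$ are pulled back from $M^6$ and $ds$ is closed, $\varphi = f\,\omega\wedge ds + \alpha\psi_+ - \beta\psi_-$ gives $d\varphi = (d_6 f\wedge\omega + f\,d\omega)\wedge ds + \alpha\,d\psi_+ - \beta\,d\psi_-$, and substituting the structure equations \eqref{SU3-str} rewrites this entirely through $\sigma_0,\pi_0,\pi_1,\nu_1,\pi_2,\sigma_2,\nu_3$ and $d_6 f$. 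In the same way, starting from \eqref{star-phi-warped} one gets $d(\ast_7\varphi) = \omega\wedge d\omega + \big[d_6 f\wedge(\alpha\psi_- + \beta\psi_+) + f(\alpha\,d\psi_- + \beta\,d\psi_+)\big]\wedge ds$, into which \eqref{SU3-str} is inserted again.

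Second, I would convert every occurrence of $\ast_7$ into $\ast_6$ by Lemma \ref{lemma}, using throughout the standard $\mathrm{SU}(3)$ identities $\ast_6\omega = \tfrac12\omega^2$, $\ast_6\psi_+ = \psi_-$, $\ast_6\psi_- = -\psi_+$, $\omega\wedge\psi_\pm = 0$, $\psi_+\wedge\psi_- = 4\,vol_6$, together with the defining relations of the irreducible pieces $\Omega^2_8(M^6)$ (for which $\sigma\wedge\omega^2 = 0$) and $\Omega^3_{12}(M^6)$ (for which $\gamma\wedge\omega = \gamma\wedge\psi_\pm = 0$). These relations are exactly what force the many a priori possible terms to collapse.

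Third, I would assemble the four torsion forms. For $\tau_0$, wedging $d\varphi$ with $\varphi$ annihilates every piece except the one along $\ast_7\varphi$: the $\omega^2$-term of $\alpha\,d\psi_+ - \beta\,d\psi_-$ and the $\psi_\pm$-terms of $f\,d\omega$ each contribute $6(\alpha\pi_0-\beta\sigma_0)\,vol_7$, so $\tau_0 = \tfrac17\ast_7\!\big(d\varphi\wedge\varphi\big) = \tfrac{12}{7}(\alpha\pi_0-\beta\sigma_0)$. For $\tau_1 = -\tfrac1{12}\ast_7(\ast_7 d\varphi\wedge\varphi)$ the $\Omega^4_7$-part of $d\varphi$ is extracted: its $ds$-component produces $\tfrac12(\alpha\sigma_0+\beta\pi_0)f\,ds$, while on $M^6$ the three contributions $\tfrac1f d_6 f$, $\pi_1$ and $\nu_1$ add up to yield $\tfrac16\eta_1$. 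Finally $\tau_2$ and $\tau_3$ come from the last two formulas in \eqref{TF-7d}, where the analogous grouping of the $d_6 f$-, $\pi_1$- and $\nu_1$-terms produces $\eta_2$ and $\eta_3$, and the leftover $\Omega^2_8$- and $\Omega^3_{12}$-pieces reproduce the $\ast_6(\eta_i\wedge\omega)$, $\ast_6(\eta_i\wedge\psi_\pm)$, $\ast_6(\eta_i\wedge\omega^2)$ and $\ast_6\nu_3$ terms.

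The \emph{main obstacle} will be the bookkeeping for $\tau_2$ and $\tau_3$: these forms mix the $S^1$-direction with $M^6$ and demand careful tracking of signs and of which $\mathrm{SU}(3)$-irreducible subspace each summand lands in, so that exactly the linear combinations $\tfrac1f d_6 f + \pi_1 - 2\nu_1$ and $\tfrac1f d_6 f - \pi_1 + \nu_1$ emerge and no spurious terms survive. A useful internal check is that $\tau_2$ must lie in $\Omega^2_{14}(M^7)$ and $\tau_3$ in $\Omega^3_{27}(M^7)$, and that the coefficient $-\tfrac{3}{14}(\alpha\pi_0-\beta\sigma_0)$ appearing in $\tau_3$ is consistent with the value already found for $\tau_0$.
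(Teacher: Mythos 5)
Your proposal is correct and follows essentially the same route as the paper's own proof: differentiate $\varphi$ and $\ast_7\varphi$, substitute the structure equations \eqref{SU3-str}, convert $\ast_7$ into $\ast_6$ via Lemma \ref{lemma}, and feed the resulting expressions for $d\varphi$, $\ast_7 d\varphi$ and $d(\ast_7\varphi)$ into \eqref{TF-7d}, collapsing terms with the $\mathrm{SU}(3)$ identities and the defining relations of $\Omega^2_8(M^6)$ and $\Omega^3_{12}(M^6)$. The intermediate facts you quote (e.g. $\psi_+\wedge\psi_-=4\,vol_6$, $\ast_6\psi_\pm=\pm\psi_\mp$ up to sign, and the two contributions of $6(\alpha\pi_0-\beta\sigma_0)\,vol_7$ giving $\tau_0=\tfrac{12}{7}(\alpha\pi_0-\beta\sigma_0)$) check out against the paper's computation.
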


\begin{proof}
The result holds after long computations where the definition of the spaces $\Omega^k_d(M^6)$ are used.  As hint, let us write down  the expressions for $d\varphi,\, \ast_7(d\varphi)$ and $d(\ast_7\varphi)$.  From~\eqref{phi-warped} and~\eqref{star-phi-warped} one gets:
\begin{eqnarray*}d\varphi &=& \left(df\wedge \omega - \frac32 f \sigma_0 \psi_+ + \frac32 f \pi_0 \psi_- + f\,\nu_1\wedge \omega + f\,\nu_3\right)\wedge ds\\
&&+ (\alpha\,\pi_0 - \beta\,\sigma_0) \omega^2 + \pi_1\wedge (\alpha \psi_+ - \beta \psi_-) - (\alpha\,\pi_2 - \beta\,\sigma_2)\wedge \omega,
\end{eqnarray*}
\begin{eqnarray*}
\ast_7 (d\varphi) &=&   -f^{-1} \ast_6 (df\wedge \omega) + \frac32 \sigma_0 \psi_- + \frac32  \pi_0 \psi_+ - \ast_6 (\nu_1\wedge \omega) - \ast_6\,\nu_3\\
&& + \left[2f (\alpha\,\pi_0 - \beta\,\sigma_0) \omega +f \ast_6 (\pi_1\wedge (\alpha \psi_+ - \beta \psi_-)) +\alpha\, f\,\pi_2 - \beta\,f\,\sigma_2\right]\wedge ds,
\end{eqnarray*}
\begin{eqnarray*}
d (\ast_7\varphi) &=& \nu_1\wedge \omega^2+ \left[-f(\alpha\sigma_2 + \beta\pi_2)\wedge\omega + f(\alpha\sigma_0 + \beta\pi_0)\omega^2 + (df + f\pi_1)\wedge (\alpha\psi_- + \beta\psi_+)\right]\wedge ds.
\end{eqnarray*}

Finally, from \eqref{TF-7d} and using Lemma \ref{lemma} the result is achieved after long and standard computations.

\end{proof}

Most of the Fern\'andez-Gray classes of $\mathrm{G}_2$-structures are characterized in terms of the cancellation of some of their
torsion forms (see Table~\ref{tablaG2}). Using expressions \eqref{torsiones}, the cancellations of $\tau_0,\,\tau_1,\,\tau_2$ and $\tau_3$ are expressed by using
the $\mathrm{SU}(3)$-torsion forms of the base $M^6$ and the warping function $f$.

\begin{corollary}\label{corolary1}
Let $(M^6, \omega, \psi_{\pm})$ be an $\mathrm{SU}(3)$-manifold. Thus, the torsion forms of the warped $\mathrm{G}_2$-structure satisfy:
\begin{equation*}
\begin{aligned}
\tau_0   = 0  & \iff \left \{  \begin{array}{ll} i) \, \, \, \, \,  \, \, & \alpha\pi_0-\beta\sigma_0 =0.
\end{array}
\right.
\\
\tau_1  = 0  & \iff    \left \{  \begin{array}{ll} ii)   \, \, \, & \alpha\sigma_0+\beta\pi_0=0,\\[3pt]
                                       iii)  \,  \, \, &\eta_1 = 0.
\end{array}
\right.
\\
\tau_2  = 0  & \iff \left \{  \begin{array}{ll} iv)  \, \, \, \, & \eta_2 = 0,\\[3pt]
                                       v)  \,  \, \, \, & \alpha\sigma_2+\beta\pi_2 =0.
\end{array}
\right.
  \\
 \tau_3 = 0 & \iff \left \{  \begin{array}{ll} vi)   &  \alpha\pi_0-\beta\sigma_0 =0, \\[3pt]
 vii)   & \eta_3 = 0,\\[3pt]
 viii) &  \alpha\pi_2-\beta\sigma_2 =0,\\[3pt]
 ix)  & \nu_3=0,
\end{array}
\right.
 \\
\end{aligned}
\end{equation*}
\end{corollary}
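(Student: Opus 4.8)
The plan is to read off each equivalence directly from the explicit formulas~\eqref{torsiones} of the previous theorem, exploiting two structural facts. First, every $2$-form (resp.\ $3$-form) on $M^7=M^6\times_f S^1$ splits uniquely as a form pulled back from $M^6$ plus $ds$ wedged with a form of one lower degree on $M^6$; so the vanishing of each $\tau_i$ is equivalent to the separate vanishing of its $ds$-part and its $M^6$-part. Second, within each part the summands appearing in~\eqref{torsiones} lie in \emph{distinct} $\mathrm{SU}(3)$-irreducible subspaces $\Omega^k_d(M^6)$, so that the part vanishes if and only if each summand does. Throughout I use that $f$ is nowhere zero, so factors of $f$ cancel, and that $\alpha^2+\beta^2=1$, so $\alpha\psi_+-\beta\psi_-$ and $\alpha\psi_-+\beta\psi_+$ are nonzero at every point.

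The scalar case $\tau_0=\frac{12}{7}(\alpha\pi_0-\beta\sigma_0)$ is immediate and yields~(i). For $\tau_1=\frac12(\alpha\sigma_0+\beta\pi_0)f\,ds+\frac16\eta_1$, the $ds$-component and the $\Omega^1(M^6)$-component are independent in $\Omega^1(M^7)$, so $\tau_1=0$ forces $\alpha\sigma_0+\beta\pi_0=0$ (condition~(ii)) and $\eta_1=0$ (condition~(iii)) separately, the converse being clear.

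The substance is in $\tau_2$ and $\tau_3$. For $\tau_2$ I separate the $ds$-part $\frac{f}{3}\ast_6(\eta_2\wedge\omega^2)\wedge ds$ from the $M^6$-part $-\alpha\sigma_2-\beta\pi_2-\frac13\ast_6(\eta_2\wedge(\alpha\psi_-+\beta\psi_+))$. The key lemma to invoke is that the Lefschetz-type map $\eta\mapsto\eta\wedge\omega^2$ is injective on $\Omega^1(M^6)$; hence vanishing of the $ds$-part is equivalent to $\eta_2=0$, which is~(iv). Once $\eta_2=0$ the term $\ast_6(\eta_2\wedge(\alpha\psi_-+\beta\psi_+))\in\Omega^2_6(M^6)$ disappears and the remaining $\Omega^2_8(M^6)$-term forces $\alpha\sigma_2+\beta\pi_2=0$, which is~(v); conversely (iv) and (v) plainly annihilate $\tau_2$. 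The same template handles $\tau_3$: its $ds$-part decomposes along $\Omega^2_1(M^6)\oplus\Omega^2_6(M^6)\oplus\Omega^2_8(M^6)$ (the $\omega$-piece, the $\ast_6(\eta_3\wedge(\alpha\psi_+-\beta\psi_-))$-piece, and the $\alpha\pi_2-\beta\sigma_2$-piece), while its $M^6$-part decomposes along $\Omega^3_6(M^6)\oplus(\Omega^3_+(M^6)\oplus\Omega^3_-(M^6))\oplus\Omega^3_{12}(M^6)$ (the $\ast_6(\eta_3\wedge\omega)$-piece, the $(\alpha\psi_+-\beta\psi_-)$-piece, and the $\ast_6\nu_3$-piece). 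Reading off the independent components and using injectivity of $\eta\mapsto\eta\wedge\omega$ on $\Omega^1(M^6)$ to pass from $\eta_3\wedge\omega=0$ to $\eta_3=0$ yields exactly (vi)--(ix), with (vi) appearing redundantly in both parts and the $ds$-condition $\ast_6(\eta_3\wedge(\alpha\psi_+-\beta\psi_-))=0$ subsumed by~(vii).

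The only genuinely delicate points, and the ones I would check most carefully, are the membership claims identifying which $\Omega^k_d(M^6)$ each Hodge-star term lands in---in particular that $\ast_6(\eta\wedge(\alpha\psi_-+\beta\psi_+))$ and $\ast_6(\eta\wedge\omega)$ sit in $\Omega^2_6(M^6)$ and $\Omega^3_6(M^6)$ respectively, so as not to interfere with the $\Omega^2_8$, the $\Omega^3_{12}$, or the $\psi_\pm$ pieces---together with the injectivity of the two Lefschetz maps $\eta\mapsto\eta\wedge\omega$ and $\eta\mapsto\eta\wedge\omega^2$ on $1$-forms, which is what converts the vanishing of a contracted expression into the clean conditions $\eta_2=0$ and $\eta_3=0$. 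Everything else is bookkeeping of independent irreducible summands.
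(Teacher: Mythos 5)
Your proposal is correct and follows essentially the same route as the paper, which states Corollary \ref{corolary1} as an immediate consequence of the expressions \eqref{torsiones} without writing a separate proof: one reads off the vanishing of each $\tau_i$ componentwise, using the $ds$-versus-pullback splitting and the fact that the summands lie in distinct $\mathrm{SU}(3)$-irreducible subspaces $\Omega^k_d(M^6)$. Your explicit justifications (injectivity of $\eta\mapsto\eta\wedge\omega$, $\eta\mapsto\eta\wedge\omega^2$ on $1$-forms, and the identification of the Hodge-star terms as lying in $\Omega^2_6$ and $\Omega^3_6$) are exactly the details the paper leaves implicit, and they are sound.
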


In Table~\ref{tabla1} we show how the $\mathrm{G}_2$-geometry of the warped product $M^6\times_fS^1$ forces conditions on the $\mathrm{SU}(3)$-geometry of the base $M^6$.

\scriptsize{
\begin{table}[h!]
\renewcommand{\arraystretch}{1.8}
\begin{center}
\begin{tabular}{|c|c||c|c|}
\hline
Class& $\mathrm{G}_2$-torsion forms & $\mathrm{SU}(3)$-torsion forms&Class\\
\hline
\multirow{2}{*}{$\mathcal P$}& \multirow{2}{*}{$\tau_0 = \tau_1 = \tau_2 = \tau_3 = 0$}& $\sigma_i = \pi_i=\nu_i=0$& \multirow{2}{*}{${0}$} \\
& &$d_6f = 0$&\\
\hline
\multirow{3}{*}{$\mathcal X_2$}& \multirow{3}{*}{$\tau_0 = \tau_1 =\tau_3 = 0$}& $\pi_0=\sigma_0=\pi_1 = \nu_3 =0$&\multirow{3}{*}{$\mathcal W_2^{\pm}\oplus \mathcal W_4$}\\
& &$\alpha\,\pi_2-\beta\sigma_2=0$&\\
& &$\frac1f d_6 f = -\nu_1$&\\
\hline
\multirow{3}{*}{$\mathcal X_3$}& \multirow{3}{*}{$\tau_0 = \tau_1 = \tau_2= 0$}&$\pi_0 = \sigma_0 = \nu_1 = 0$&\multirow{3}{*}{$\mathcal W_2^{\pm}\oplus \mathcal W_3\oplus \mathcal W_5$}\\
& &$\alpha \sigma_2 + \beta \pi_2 = 0$&\\
& &$\frac1f d_6 f = -\pi_1$&\\
\hline
\multirow{2}{*}{$\mathcal X_4$}& \multirow{2}{*}{$\tau_0 = \tau_2 = \tau_3 = 0$}&$\sigma_2 = \pi_2 = \nu_3=0$&\multirow{2}{*}{$\mathcal W_1^{\pm}\oplus \mathcal W_4\oplus \mathcal W_5$}\\
& &$\frac1f d_6 f = \frac12 \nu_1 = \frac13 \pi_1$&\\
\hline
\multirow{3}{*}{$\mathcal X_1 \oplus \mathcal X_3$}& \multirow{3}{*}{$\tau_1 = \tau_2 = 0$}& $\alpha\sigma_0 + \beta\pi_0=0$&\multirow{3}{*}{$\mathcal W_1^{\pm}\oplus\mathcal W_2^{\pm}\oplus \mathcal W_3\oplus \mathcal W_5$}\\
& &$\alpha\sigma_2 + \beta\pi_2=0$&\\
& &$\nu_1=0,\,\, \frac1f d_6 f = -\pi_1$&\\
\hline
\end{tabular}
\medskip
\caption{Relation between torsion forms of the warped $\mathrm{G}_2$-structure and the $\mathrm{SU}(3)$-structure}
\label{tabla1}
\end{center}
\end{table}
}

\normalsize{
\begin{remark}
From Corollary \ref{corolary1}, $\tau_3=0$ implies $\tau_0=0$, therefore nearly Parallel structures can not be achieved as warped $\mathrm{G}_2$-structures of the form~\eqref{phi-warped}.
\end{remark}
}

\section{ The Laplacian flow and coflow of warped $\mathrm{G}_2$-structure of the form $M^6\times_f S^1$ }

Recall the definition of the Laplacian flow and coflow, that are respectively:
$$(LF)\,\,\begin{cases}
\dfrac{\partial}{\partial t} \varphi(t) = \Delta_t \varphi(t),\\[5pt]
d_7\,\varphi(t) = 0,
\end{cases}\qquad (LcF)\,\,\begin{cases}
\dfrac{\partial}{\partial t} (\ast_t\varphi(t)) = -\Delta_t (\ast_t\varphi(t)),\\[5pt]
d_7\,(\ast_t\varphi(t)) = 0,
\end{cases}$$
where $\varphi(t)$ is a one-parameter family of $\mathrm{G}_2$-structures and $\Delta_t$, $\ast_t$ denote the Laplacian and the Hodge star operator induced by $\varphi(t)$ for every $t$.

Our objective in this section is to particularize the Laplacian flow and coflow considering one-parameter families of $\mathrm{G}_2$-structures obtained as warped products, i.e.
\begin{equation}\label{varphi-t}
\varphi(t) = f(t) \omega(t)\wedge ds + (\alpha \psi_+(t) +  \beta\psi_-(t)).
\end{equation}
 From the previous expression, we derive the following: 
\begin{equation}\label{partials}
\begin{array}{l}\dfrac{\partial}{\partial t} \varphi(t)= \left(\dfrac{\partial }{\partial t}f(t)\,\omega(t) + f(t) \dfrac{\partial}{\partial t} \omega(t)\right) \wedge ds +\alpha \dfrac{\partial}{\partial t}\psi_+(t) -\beta \dfrac{\partial}{\partial t}\psi_-(t),\\[7pt]
\dfrac{\partial}{\partial t} (\ast_7\varphi(t))= \left[\dfrac{\partial }{\partial t}f(t)\,(\beta \psi_+(t) + \alpha \psi_-(t) )+ f(t) \left(\beta \dfrac{\partial}{\partial t} \psi_+(t) + \alpha \dfrac{\partial}{\partial t} \psi_-(t)\right)\right] \wedge ds +\dfrac12 \dfrac{\partial}{\partial t}\omega^2(t).
\end{array}
\end{equation}

Now we focus on the 3-form $\Delta_7\varphi$, resp. the 4-form $\Delta_7\ast_7\varphi$. For a generic G$_2$-structure, considering the formulas given in \eqref{tor} of the exterior derivatives of $\varphi$ and $\ast_7 \varphi$, a description of the Laplacian in terms of the torsion forms can be given as
\begin{equation}\label{Laplacian}
\Delta_7 \varphi=d_7(\tau_2-4\ast_7(\tau_1\wedge \ast_7\varphi))+\ast_7 d_7(\tau_0\varphi+3\ast_7(\tau_1 \wedge \varphi)+\tau_3).
\end{equation}
Since the Laplacian commutes with the Hodge star operator, $\Delta_7 \ast_7 = \ast_7 \Delta_7,$
%
%
%
combining \eqref{torsiones} and \eqref{Laplacian} it is also posible to describe $\Delta_7 \ast_7\varphi$ of a warped $\mathrm{G}_2$-structure in terms of the torsion forms of the $\mathrm{SU}(3)$-structure and the warping function $f$ for particular classes of $\mathrm{G}_2$-structures.

Provided that we are interested in the Laplacian flow, resp. coflow, we consider the 3-form $\Delta_7\varphi$, resp. the 4-form $\Delta_7\ast_7\varphi$, when $\varphi$ is closed, resp. coclosed. Let us start with the closed ones:

\begin{proposition}\label{prop-closed}
Let $\varphi$ be a warped closed $\mathrm{G}_2$-structure~\eqref{phi-warped} on $M^6\times_f S^1$ where $(\omega,\psi_{\pm})$ is an $\mathrm{SU}(3)$-structure on $M^6$.
Then $\Delta_7\varphi$ has the following expression:
 $$
\Delta_7\varphi=-d_6(\alpha \sigma_2+\beta \pi_2)+d_6\ast_6\big(\nu_1 \wedge (\alpha \psi_- + \beta \psi_+)\big)+f\big[\nu_1\wedge \ast_6(\nu_1\wedge \omega^2)- d_6 \ast_6 (\nu_1 \wedge \omega^2)\big]\wedge ds,
$$ where $\alpha\pi_2 - \beta \sigma_2 = 0$.

In the particular case that the warping function $f$ is constant $(d_6f=0)$, then
\begin{equation*}
\Delta_7\varphi=-d_6(\alpha \sigma_2+\beta \pi_2).
\end{equation*}

\begin{proof}
Since $\varphi$ is closed, $\tau_0=\tau_1=\tau_3=0$  and by \eqref{Laplacian} $$\Delta_7 \varphi = d_7 \tau_2,$$ where in view of~\eqref{torsiones}
$$\tau_2=-\alpha\sigma_2-\beta\pi_2+\ast_6(\nu_1\wedge(\alpha\psi_-+\beta\psi_+))-f\ast_6(\nu_1\wedge\omega^2)\wedge ds.$$
For the case $f$ constant, since $\frac{1}{f}d_6f=-\nu_1$ (see Table~ \ref{tabla1}) then  $\nu_1 = 0$ and  the result holds.
\end{proof}
\end{proposition}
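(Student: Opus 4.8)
The plan is to exploit that a \emph{closed} $\mathrm{G}_2$-structure lies in the Fern\'andez--Gray class $\mathcal X_2$ and thereby collapse the general Laplacian formula \eqref{Laplacian} to a single exact term. First I would note that closedness means $d_7\varphi = 0$; comparing with the decomposition \eqref{tor} and using that its four summands sit in distinct $\mathrm{G}_2$-irreducible subspaces of $\Omega^4(M^7)=\Omega^4_1\oplus\Omega^4_7\oplus\Omega^4_{27}$, this forces $\tau_0 = \tau_1 = \tau_3 = 0$. Substituting these vanishings into \eqref{Laplacian} reduces it to $\Delta_7\varphi = d_7\tau_2$, so the whole computation amounts to differentiating the single torsion $2$-form $\tau_2$.

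Next I would specialize the expression for $\tau_2$ in \eqref{torsiones}. By Corollary \ref{corolary1} (equivalently, the $\mathcal X_2$ row of Table~\ref{tabla1}) the conditions $\tau_0=\tau_1=\tau_3=0$ translate into $\pi_0=\sigma_0=\pi_1=\nu_3=0$, the relation $\alpha\pi_2-\beta\sigma_2=0$, and the warping identity $\tfrac1f d_6f = -\nu_1$. Feeding $\pi_1=0$ and $\tfrac1f d_6f=-\nu_1$ into $\eta_2 = \tfrac1f d_6f + \pi_1 - 2\nu_1$ gives $\eta_2 = -3\nu_1$, which simplifies $\tau_2$ to $-\alpha\sigma_2-\beta\pi_2 + \ast_6\big(\nu_1\wedge(\alpha\psi_-+\beta\psi_+)\big) - f\ast_6(\nu_1\wedge\omega^2)\wedge ds$.

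Then I would compute $d_7\tau_2$ termwise, treating $d_7$ as $d_6$ on $M^6$-forms with $ds$ closed (nothing depends on $s$). The first two summands differentiate directly into $-d_6(\alpha\sigma_2+\beta\pi_2)$ and $d_6\ast_6\big(\nu_1\wedge(\alpha\psi_-+\beta\psi_+)\big)$. For the $ds$-component the decisive move is the Leibniz rule $d_6\big(-f\ast_6(\nu_1\wedge\omega^2)\big) = -d_6f\wedge\ast_6(\nu_1\wedge\omega^2) - f\,d_6\ast_6(\nu_1\wedge\omega^2)$ followed by the substitution $d_6f=-f\nu_1$; this produces the quadratic term $f\,\nu_1\wedge\ast_6(\nu_1\wedge\omega^2)$ and assembles the claimed third summand $f\big[\nu_1\wedge\ast_6(\nu_1\wedge\omega^2) - d_6\ast_6(\nu_1\wedge\omega^2)\big]\wedge ds$. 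For the constant-warping case, $d_6f=0$ together with $\tfrac1f d_6f=-\nu_1$ gives $\nu_1=0$, so the last two summands vanish and only $-d_6(\alpha\sigma_2+\beta\pi_2)$ survives.

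The main obstacle I anticipate is the bookkeeping in the middle step: one must correctly detect that the $\mathcal X_2$ constraints collapse $\eta_2$ to $-3\nu_1$ (so that the generic $\tau_2$ of \eqref{torsiones} acquires its simplified shape), and then apply the Leibniz rule to the $f$-weighted one-form together with $d_6f=-f\nu_1$ so as to recover the nonlinear contribution $\nu_1\wedge\ast_6(\nu_1\wedge\omega^2)$ rather than dropping it. Everything else is a routine use of the $\ast_6$--$\ast_7$ compatibility of Lemma~\ref{lemma}, already encoded in \eqref{torsiones}.
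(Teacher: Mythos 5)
Your proposal is correct and follows essentially the same route as the paper: closedness forces $\tau_0=\tau_1=\tau_3=0$, so $\Delta_7\varphi=d_7\tau_2$, then the $\mathcal X_2$ constraints ($\pi_0=\sigma_0=\pi_1=\nu_3=0$, $\alpha\pi_2-\beta\sigma_2=0$, $\tfrac1f d_6f=-\nu_1$, hence $\eta_2=-3\nu_1$) reduce $\tau_2$ to the stated form, and differentiating with $d_6f=-f\nu_1$ yields the claimed expression, with $\nu_1=0$ in the constant-$f$ case. You merely make explicit the simplification of $\eta_2$ and the Leibniz-rule step that the paper's proof leaves implicit "in view of \eqref{torsiones}".
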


Consider now coclosed $\mathrm{G}_2$-structures:
\begin{proposition}\label{prop-coclosed}
Let $\varphi$ be a warped coclosed $\mathrm{G}_2$-structure~\eqref{phi-warped} on $M^6\times_f S^1$ where $(\omega,\psi_{\pm})$ is an $\mathrm{SU}(3)$-structure on $M^6$.  Then $\Delta_7\ast_7\varphi$ has the following expression:

$$
\begin{array}{lll}
\Delta_7\ast_7\varphi&=&\frac32(\alpha\pi_0-\beta\sigma_0)\big[(\alpha\pi_0-\beta\sigma_0)\omega^2+\pi_1\wedge (\alpha\psi_+-\beta\psi_-)-(\alpha\pi_2-\beta\sigma_2)\wedge\omega\big]+d_6\ast_6(\pi_1\wedge \omega)\\[1em]
&&-d_6(\ast_6\nu_3)+\frac32 d_6(\alpha\pi_0-\beta\sigma_0)\wedge (\alpha \psi_+ - \beta \psi_- )\\[1em]
&&+f\Big[2d_6(\alpha\pi_0-\beta\sigma_0)\wedge \omega+(\alpha\pi_0-\beta\sigma_0)\left(-2\pi_1\wedge\omega-3\sigma_0\psi_++3\pi_0\psi_-+2\nu_3\right) +d_6(\alpha\pi_2-\beta\sigma_2)\\[1em]
&&-\pi_1 \wedge \ast_6 \big(\pi_1\wedge(\alpha \psi_+-\beta \psi_-)+d_6 \ast_6 \big(\pi_1\wedge(\alpha \psi_+-\beta \psi_-)\big)-\pi_1\wedge (\alpha \pi_2-\beta\sigma_2)\Big]\wedge ds,
\end{array}
$$ where $\alpha\sigma_i + \beta\pi_i =0$ for $i=0,2$.

Moreover, if $f$ is constant, then
\begin{equation}\label{laplacian-coclosed-fcte}
\begin{array}{lll}
\Delta_7\ast_7\varphi&=&\frac32(\alpha\pi_0-\beta\sigma_0)\big((\alpha\pi_0-\beta\sigma_0)\omega^2-(\alpha\pi_2-\beta\sigma_2)\wedge\omega\big)-d_6(\ast_6\nu_3) \\[1em]
&&+\frac32 d_6(\alpha \pi_0-\beta \sigma_0) \wedge (\alpha \psi_+-\beta \psi_-)\\[1em]
&&+f\Big[2d_6(\alpha\pi_0-\beta\sigma_0)\wedge \omega +(\alpha\pi_0-\beta\sigma_0)\left(-3\sigma_0\psi_++3\pi_0\psi_-+2\nu_3\right)+d_6(\alpha\pi_2-\beta\sigma_2)\Big]\wedge ds.\\[1em]
\end{array}
\end{equation}


\end{proposition}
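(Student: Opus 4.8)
The plan is to use the coclosed hypothesis to trivialize most of the torsion, reduce the fourth-order operator $\Delta_7\ast_7$ to a single application of $d_7$, and then differentiate an explicit form. First I would read off the constraints: coclosedness means $d_7(\ast_7\varphi)=0$, so the second equation of~\eqref{tor} forces $\tau_1=\tau_2=0$ (the summands $\tau_1\wedge\ast_7\varphi$ and $\tau_2\wedge\varphi$ lie in inequivalent $\mathrm{G}_2$-irreducible pieces of $\Omega^5$, hence vanish separately), i.e. $\varphi$ belongs to the class $\mathcal X_1\oplus\mathcal X_3$. Conditions $ii)$--$v)$ of Corollary~\ref{corolary1} then translate this into the $\mathrm{SU}(3)$-data of the base, matching the corresponding row of Table~\ref{tabla1}:
\begin{equation*}
\nu_1=0,\qquad \frac1f d_6f=-\pi_1,\qquad \alpha\sigma_0+\beta\pi_0=0,\qquad \alpha\sigma_2+\beta\pi_2=0,
\end{equation*}
which are exactly the hypotheses $\alpha\sigma_i+\beta\pi_i=0$ of the statement, supplemented by the warping identity $d_6f=-f\pi_1$.

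Next I would collapse the operator. Since $\Delta_7$ commutes with $\ast_7$ and $\ast_7^2=\mathrm{id}$ on $3$- and $4$-forms in dimension seven, while $\tau_1=\tau_2=0$ annihilates the first bracket of~\eqref{Laplacian}, that formula reduces to $\Delta_7\varphi=\ast_7 d_7(\tau_0\varphi+\tau_3)$, and therefore
\begin{equation*}
\Delta_7\ast_7\varphi=\ast_7\Delta_7\varphi=d_7(\tau_0\varphi+\tau_3).
\end{equation*}
The whole problem is thus reduced to differentiating the explicit $3$-form $\tau_0\varphi+\tau_3$.

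I would then simplify $\tau_0\varphi+\tau_3$ before differentiating. Under the constraints one has $\eta_3=\frac1f d_6f-\pi_1+\nu_1=-2\pi_1$, so the $\ast_6$-terms in $\tau_3$ of~\eqref{torsiones} become $f\ast_6(\pi_1\wedge(\alpha\psi_+-\beta\psi_-))\wedge ds$ and $\ast_6(\pi_1\wedge\omega)$; adding $\tau_0\varphi$, the coefficient of $\alpha\psi_+-\beta\psi_-$ collapses via $\frac{12}{7}-\frac{3}{14}=\frac32$, and one writes $\tau_0\varphi+\tau_3=A\wedge ds+B$ with $A$ an explicit $2$-form carrying an overall factor $f$ and $B=\frac32(\alpha\pi_0-\beta\sigma_0)(\alpha\psi_+-\beta\psi_-)+\ast_6(\pi_1\wedge\omega)-\ast_6\nu_3$.

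Finally I would compute $d_7=d_6$: since the forms are $s$-independent and $ds$ is closed, $d_7(A\wedge ds+B)=d_6A\wedge ds+d_6B$. Expanding $d_6B$ with the structure equations~\eqref{SU3-str} for $d_6(\alpha\psi_+-\beta\psi_-)$ produces the $\frac32(\alpha\pi_0-\beta\sigma_0)[\cdots]$ block together with the $d_6\ast_6(\pi_1\wedge\omega)$, $-d_6(\ast_6\nu_3)$ and $\frac32 d_6(\alpha\pi_0-\beta\sigma_0)\wedge(\alpha\psi_+-\beta\psi_-)$ terms. In $d_6A$ the warping enters decisively: each $d_6f$ must be replaced by $-f\pi_1$, generating the $-2\pi_1\wedge\omega$, $-\pi_1\wedge\ast_6(\cdots)$ and $-\pi_1\wedge(\alpha\pi_2-\beta\sigma_2)$ corrections, while $d_6\omega=-\frac32\sigma_0\psi_++\frac32\pi_0\psi_-+\nu_3$ (using $\nu_1=0$) supplies the $-3\sigma_0\psi_++3\pi_0\psi_-+2\nu_3$ piece, and the overall factor $f$ survives as the $f[\cdots]\wedge ds$ term. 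The constant-$f$ corollary is then immediate: $d_6f=0$ forces $\pi_1=0$, so every $\pi_1$-term drops and one recovers~\eqref{laplacian-coclosed-fcte}. I expect the only real difficulty to be organizational---tracking the degree-dependent signs produced by wedging with $ds$ and substituting $d_6f=-f\pi_1$ uniformly throughout the $A$-block, since one stray factor corrupts the entire $\wedge ds$ coefficient.
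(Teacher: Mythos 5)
Your proposal is correct and follows exactly the paper's route: coclosedness gives $\tau_1=\tau_2=0$, so \eqref{Laplacian} together with $\Delta_7\ast_7=\ast_7\Delta_7$ collapses to $\Delta_7\ast_7\varphi=d_7(\tau_0\varphi+\tau_3)$, one substitutes \eqref{torsiones} under the Table~\ref{tabla1} constraints ($\nu_1=0$, $\frac1f d_6f=-\pi_1$) to obtain the same intermediate $3$-form $A\wedge ds+B$, and differentiates, with $\pi_1=0$ handling the constant-$f$ case. In fact you carry out the final differentiation in more detail than the paper, which simply states that ``the result follows.''
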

\begin{proof}

The condition $\varphi$ being coclosed is equivalent to $\tau_1=\tau_2=0$ and as a consequence of \eqref{Laplacian}:
$$\Delta_7 \ast_7\varphi=\ast_7 \Delta_7 \varphi=d_7(\tau_0\varphi+\tau_3).$$
Now, using~\eqref{torsiones}:
$$
\begin{array}{lll}
\Delta_7 \ast_7\varphi&=&d_7 \Big[f\Big(2(\alpha \pi_0 -\beta \sigma_0)\omega +\ast_6\big(\pi_1\wedge(\alpha \psi_+-\beta \psi_-)\big)+ (\alpha \pi_2 - \beta \sigma_2) \Big)\wedge ds\\[1em]
&&+\frac32 (\alpha\pi_0-\beta\sigma_0)(\alpha \psi_+-\beta\psi_-)+\ast_6(\pi_1\wedge \omega)-\ast_6\nu_3\Big],
\end{array}
$$
and the result follows.  In order to prove~\eqref{laplacian-coclosed-fcte}, observe that $\pi_1 = 0$ according to Table \ref{tabla1}.



\end{proof}

\begin{remark}
  In what follows, and similarly as in \cite{FR}, we restrict our attention to the case when the warping function $f$ is constant over
  the base manifold $M^6$.

\end{remark}

In order to obtain solutions of the Laplacian flow of a warped closed $\mathrm{G}_2$-structure,  combining  the expressions \eqref{partials} and Proposition~\ref{prop-closed}, we can set the system of equations that must be satisfied:

\begin{proposition}
For a closed warped $\mathrm{G}_2$-structure \eqref{phi-warped}, the equation of the Laplacian flow (LF) is equivalent to:
\begin{equation*}\label{closed-flow}
\begin{cases}
f'(t) \, \omega(t) + f(t) \dfrac{\partial}{\partial t} \omega(t)=0,\\[5pt]
\alpha\dfrac{\partial}{\partial t}\psi_+(t) - \beta\dfrac{\partial}{\partial t}\psi_-(t) = -d_6(\alpha \sigma_2(t)+\beta \pi_2(t)).
\end{cases}
\end{equation*} where $\alpha \pi_2(t) - \beta \sigma_2(t) = 0$.
\end{proposition}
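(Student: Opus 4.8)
The plan is to substitute the two already-computed ingredients into the Laplacian flow equation $\partial_t\varphi(t)=\Delta_t\varphi(t)$ and then to separate the two sides according to the product structure $M^7=M^6\times S^1$. For the left-hand side I would use the expression for $\partial_t\varphi(t)$ recorded in~\eqref{partials}. For the right-hand side I would invoke Proposition~\ref{prop-closed}: by the standing assumption the warping function is constant along $M^6$ (so $d_6f=0$), and in that case the proposition reduces $\Delta_7\varphi$ to the single base term $-d_6(\alpha\sigma_2+\beta\pi_2)$, while simultaneously supplying the closedness constraint $\alpha\pi_2(t)-\beta\sigma_2(t)=0$, which must hold at every time $t$.

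The key step is the unique decomposition of three-forms on $M^6\times S^1$. Every form appearing here is of the type $\gamma=\gamma_3+\gamma_2\wedge ds$ with $\gamma_3\in\Omega^3(M^6)$ and $\gamma_2\in\Omega^2(M^6)$ pulled back from the base, and the two summands lie in complementary subspaces, so two such forms agree if and only if their $\gamma_3$-parts and their $\gamma_2$-parts agree separately. Reading off $\partial_t\varphi=\Delta_7\varphi$ in this decomposition, the left-hand side contributes $\gamma_2=f'(t)\,\omega(t)+f(t)\,\partial_t\omega(t)$ to the $ds$-part and $\gamma_3=\alpha\,\partial_t\psi_+(t)-\beta\,\partial_t\psi_-(t)$ to the base part, whereas $-d_6(\alpha\sigma_2+\beta\pi_2)$ is a three-form on $M^6$ alone and hence contributes nothing to the $ds$-part. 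Therefore the flow equation is equivalent to the vanishing of the $ds$-component, namely $f'(t)\,\omega(t)+f(t)\,\partial_t\omega(t)=0$, together with the equality of the base components, namely $\alpha\,\partial_t\psi_+(t)-\beta\,\partial_t\psi_-(t)=-d_6(\alpha\sigma_2(t)+\beta\pi_2(t))$, which is exactly the claimed system.

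Finally I would note that the constraint $\alpha\pi_2(t)-\beta\sigma_2(t)=0$ is not an extra hypothesis but is forced by the closedness $d_7\varphi(t)=0$ built into (LF): by Corollary~\ref{corolary1}, $d\varphi=0$ amounts to $\tau_0=\tau_1=\tau_3=0$, and the vanishing of $\tau_3$ includes precisely condition (viii), $\alpha\pi_2-\beta\sigma_2=0$, which persists for all $t$ along the flow. The only point demanding a little care --- rather than a genuine obstacle --- is to confirm that $\Delta_7\varphi$ carries no $ds$-component, so that the $ds$-part of the flow equation is homogeneous. This is guaranteed by the constant-$f$ form of Proposition~\ref{prop-closed}: in the general expression the $ds$-component is $f\big[\nu_1\wedge\ast_6(\nu_1\wedge\omega^2)-d_6\ast_6(\nu_1\wedge\omega^2)\big]\wedge ds$, which vanishes once $\nu_1=0$, and $\nu_1=0$ follows from $d_6f=0$ through the closedness relation $\tfrac1f d_6f=-\nu_1$.
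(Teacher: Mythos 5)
Your proof is correct and follows essentially the same route as the paper: there the proposition is presented precisely as the result of combining \eqref{partials} with the constant-$f$ case of Proposition~\ref{prop-closed}, which is exactly the substitution-and-decomposition argument you give. Your care in checking that $\Delta_7\varphi$ has no $ds$-component (via $\nu_1=0$ from $d_6f=0$ and the closedness relation $\tfrac1f d_6f=-\nu_1$) reproduces the reasoning already contained in the proof of Proposition~\ref{prop-closed}, so nothing is missing.
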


\begin{remark}
For the particular case of $(\alpha,\beta)=(1,0)$, we recover the system already studied by Fino and Raffero in \cite[Prop. 5.2]{FR}.
\end{remark}

\medskip

Similarly, for the coflow, we get the following system of equations:

\begin{proposition}
For a coclosed warped $\mathrm{G}_2$-structure \eqref{phi-warped}, the equation of the Laplacian coflow (LcF) is equivalent to:
\begin{equation*}
 \begin{cases}
\begin{array}{l}
 \dfrac{\partial \omega^2(t)}{\partial t} = -3(\alpha\pi_0(t)-\beta\sigma_0(t))^2\omega^2(t)+3(\alpha\pi_0(t)-\beta\sigma_0(t))(\alpha\pi_2(t)-\beta\sigma_2(t))\wedge \omega(t)\\[5pt] \hspace{3cm} +2d_6(\ast_6\nu_3(t))-3d_6(\alpha \pi_0(t) -\beta \sigma_0(t))\wedge (\alpha \psi_+(t)-\beta\psi_-(t)),\\[5pt]
\dfrac{f'(t)}{f(t)} \left(\beta\psi_+(t)+\alpha\psi_-(t)\right)+\left(\beta \dfrac{\partial \psi_+(t)}{\partial t}+\alpha \dfrac{\partial \psi_-(t)}{\partial t}\right) =  \\[5pt] \hspace{3cm} -(\alpha\pi_0(t)-\beta\sigma_0(t))\left[-3\sigma_0(t)\psi_+(t)+3\pi_0(t)\psi_-(t)+2\nu_3(t)\right]\\[5pt] \hspace{3cm} -d_6(\alpha \pi_2(t)-\beta \sigma_2(t)) -2d_6(\alpha \pi_0(t)-\beta \sigma_0(t))\wedge \omega(t),
 \end{array}
 \end{cases}
 \end{equation*} where $\alpha \sigma_i(t) + \beta \pi_i(t) = 0$ for $i=0,2$.
\end{proposition}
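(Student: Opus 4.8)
The plan is to substitute the two expressions already at our disposal into the defining identity of the coflow. Because we have restricted to a constant warping function $f$, the flow equation (LcF) reads $\partial_t(\ast_7\varphi(t)) = -\Delta_7(\ast_7\varphi(t))$, and both sides are known: the left-hand side is the second line of~\eqref{partials}, and the right-hand side is the negative of the constant-$f$ formula~\eqref{laplacian-coclosed-fcte} of Proposition~\ref{prop-coclosed}. Both are $4$-forms on $M^6\times_f S^1$, so the proof reduces to equating them and transcribing the outcome as conditions on the evolving $\mathrm{SU}(3)$-data.

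The decisive structural step is the product decomposition: on $M^6\times S^1$ every form splits uniquely as $\mu + \nu\wedge ds$ with $\mu,\nu$ pulled back from $M^6$. I would apply this to the $4$-form identity above; since the $ds$-free summands and the $\wedge ds$-summands are linearly independent, they may be matched separately, and this matching is what makes the resulting system equivalent to the original equation rather than merely implied by it. From~\eqref{partials} the $ds$-free part of the left-hand side is exactly $\tfrac12\partial_t\omega^2(t)$, while the $\wedge ds$-part carries the evolution of $\psi_\pm$ together with the $f'$-term; likewise~\eqref{laplacian-coclosed-fcte} separates into a $4$-form on $M^6$ and a $3$-form on $M^6$ wedged with $ds$.

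Carrying out the two matches then gives the system. Equating the $ds$-free $4$-forms and multiplying by $2$ yields the first evolution equation for $\omega^2$; equating the coefficients of $ds$ and dividing by the nonzero constant $f$ yields the second equation, which is where the factor $f'/f$ appears. The algebraic side-conditions $\alpha\sigma_i(t)+\beta\pi_i(t)=0$ for $i=0,2$ are not additional hypotheses: they are precisely the coclosedness relations $\tau_1=\tau_2=0$ recorded in Proposition~\ref{prop-coclosed} (equivalently rows $ii$ and $v$ of Corollary~\ref{corolary1}), and since (LcF) keeps $\ast_t\varphi(t)$ coclosed for every $t$ they must hold throughout the flow.

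Since the constituent formulas~\eqref{partials} and~\eqref{laplacian-coclosed-fcte} are already proved, no serious analysis remains; the only real care needed is the bookkeeping of signs and of the factor of $f$ when splitting off the $ds$-component, and the observation that dividing by $f$ is legitimate because $f$ is a nowhere-vanishing constant. This completes the equivalence.
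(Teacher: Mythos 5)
Your proof is correct and is essentially the paper's own argument: the paper obtains this system precisely by substituting \eqref{partials} and the constant-$f$ expression \eqref{laplacian-coclosed-fcte} of Proposition \ref{prop-coclosed} into (LcF) and matching the $ds$-free and $\wedge\, ds$ components separately, with the factor $f'(t)/f(t)$ arising from dividing the $ds$-component by the nonvanishing $f$. The relations $\alpha\sigma_i(t)+\beta\pi_i(t)=0$, $i=0,2$, are likewise treated there as the coclosedness constraints carried along the flow rather than as extra hypotheses.
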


\begin{corollary}
For the particular case of $(\alpha,\beta)=(0,1)$, the Laplacian coflow becomes:

\begin{equation}\label{equacion}
 \begin{cases}
\begin{array}{l}
 \dfrac{\partial \omega^2(t)}{\partial t} = -3\sigma_0(t)^2\omega^2(t)+3 \sigma_0(t) \sigma_2(t) \wedge \omega(t)+2d_6(\ast_6\nu_3(t))-3d_6\sigma_0(t)\wedge\psi_-(t),\\[7pt]
\dfrac{f'(t)}{f(t)} \psi_+(t)+ \dfrac{\partial \psi_+(t)}{\partial t} =  -3\sigma_0(t)^2\psi_+(t)+2\sigma_0(t)\nu_3(t)+d_6\sigma_2(t)+2d_6\sigma_0(t)\wedge \omega(t).
 \end{array}
 \end{cases}
 \end{equation}
\end{corollary}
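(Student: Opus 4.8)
The plan is to obtain this Corollary as a direct specialization of the preceding Proposition, in which the Laplacian coflow (LcF) for a general coclosed warped $\mathrm{G}_2$-structure was already reduced to a system of two evolution equations together with the algebraic side conditions $\alpha\sigma_i(t)+\beta\pi_i(t)=0$ for $i=0,2$. Since $(\alpha,\beta)=(0,1)$ satisfies $\alpha^2+\beta^2=1$, this pair describes a genuine warped $\mathrm{G}_2$-structure of the form~\eqref{phi-warped}, so all hypotheses of the Proposition are met and I may simply substitute these values throughout.

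First I would impose the side conditions. With $\alpha=0$ and $\beta=1$ the relations $\alpha\sigma_i+\beta\pi_i=0$ collapse to $\pi_0(t)=0$ and $\pi_2(t)=0$ for all $t$, which lets me discard every term containing $\pi_0$ or $\pi_2$. Next I would record the three combinations that recur throughout the system: $\alpha\pi_0-\beta\sigma_0=-\sigma_0$, $\alpha\pi_2-\beta\sigma_2=-\sigma_2$, and $\alpha\psi_+-\beta\psi_-=-\psi_-$, while the left-hand side factors reduce via $\beta\psi_++\alpha\psi_-=\psi_+$ and $\beta\,\partial_t\psi_++\alpha\,\partial_t\psi_-=\partial_t\psi_+$.

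With these reductions in hand, substitution into the first evolution equation gives $\partial_t\omega^2=-3\sigma_0^2\,\omega^2+3\sigma_0\sigma_2\wedge\omega+2d_6(\ast_6\nu_3)-3d_6\sigma_0\wedge\psi_-$, where the last term results from expanding $-3d_6(-\sigma_0)\wedge(-\psi_-)$. For the second equation the right-hand side becomes $\sigma_0\bigl(-3\sigma_0\psi_++2\nu_3\bigr)+d_6\sigma_2+2d_6\sigma_0\wedge\omega$ once the $\pi_0$-term is dropped, which rearranges to the stated $-3\sigma_0^2\psi_++2\sigma_0\nu_3+d_6\sigma_2+2d_6\sigma_0\wedge\omega$. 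I expect no conceptual difficulty here; the only real care is the bookkeeping of the several minus signs arising from $\alpha\pi_k-\beta\sigma_k=-\sigma_k$ and from $\alpha\psi_+-\beta\psi_-=-\psi_-$, so I would verify each sign explicitly rather than relying on the pattern.
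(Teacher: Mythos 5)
Your proposal is correct and follows exactly the route the paper intends: the Corollary is an immediate specialization of the preceding Proposition, obtained by substituting $(\alpha,\beta)=(0,1)$, using the side conditions $\alpha\sigma_i+\beta\pi_i=0$ (which here force $\pi_0=\pi_2=0$), and tracking the signs in $\alpha\pi_k-\beta\sigma_k=-\sigma_k$ and $\alpha\psi_+-\beta\psi_-=-\psi_-$. Your sign bookkeeping checks out in every term, so there is nothing to add.
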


\medskip

\begin{remark}
For the Laplacian coflow we chose the parameters $(\alpha,\beta)$ to be $(0,1)$ in order to obtain equations depending on the torsion forms $\sigma_0, \sigma_2$ and $\nu_3$ (see~\eqref{SU3-str}) which are the ones that appear in the canonical definitions of the $\mathrm{SU}(3)$-structures, nearly K\"ahler, symplectic half-flat and balanced, respectively (see equations~\eqref{su3-nK}, \eqref{su3-sH} and \eqref{su3-bA} in the next sections).
\end{remark}

\section{New solutions to the Laplacian coflow}

Our main objective is to provide new long-time solutions $\varphi(t)$ for the Laplacian coflow~\eqref{equacion}.  In what follows we will consider one parameter families of warped G$_2$-structures~\eqref{varphi-t} on $G\times S^1$, being $G$ a Lie group.  The underlying $\mathrm{SU}(3)$-structures  $(\omega(t),\psi_+(t),\psi_-(t))$ are left-invariant and can be locally described
as 
\begin{equation}\label{SU3-x}
\begin{array}{c}
\omega(t) =x^{12}+x^{34}+x^{56},\\[10pt]
\psi_+(t) =x^{135}-x^{146}-x^{236}-x^{245},\quad \psi_-(t) =-x^{246}+x^{235}+x^{145}+x^{136},
\end{array}
\end{equation}
where $\{ x^i(t)\}$ denotes for every $t$ a local adapted basis and $x^{ij}$ stands for $x^i(t)\wedge x^{j}(t)$ and $x^{ijk}$ stands for $x^i(t)\wedge x^{j}(t) \wedge x^k(t)$.
Our ansatz consists on stating that 
\begin{equation}\label{evolution}
x^i(t) = f_i (t) h^i,
\end{equation}
 where $f_i(t)$ are differentiable non-vanishing real functions satisfying $f_i(0)=1$ and $\{h^1,\ldots, h^6\}$ is an adapted basis for the SU(3)-structure for $t=0$.  Notice that~\eqref{evolution} defines in fact a global basis since we are considering parallelizable manifolds.

Direct computations show:
\begin{eqnarray}\label{dw}
\frac{\partial \omega(t)}{\partial t}& = & \sum_{k=1}^3 \left(\frac{f_{2k-1}'(t)}{f_{2k-1}(t)} + \frac{f_{2k}'(t)}{f_{2k}(t)}\right)  x^{2k-1}(t)\wedge x^{2k}(t).
\end{eqnarray}

\medskip

\begin{equation}\label{dw2}
\begin{array}{lll}
  \dfrac{\partial \omega^2(t)}{\partial t} &=&2\sum_{(i,j,k,l)\in\mathcal J}\left(\dfrac{f_i'(t)}{f_i(t)} +\dfrac{f_j'(t)}{f_j(t)} +\dfrac{f_k'(t)}{f_k(t)} + \dfrac{f_l'(t)}{f_l(t)}  \right) x^{ijkl},\end{array}
\end{equation}

with $\mathcal{J}=\{(1,2,3,4),(1,2,5,6),(3,4,5,6)\}$.
\bigskip

\begin{equation}\label{segunda}
\begin{array}{lll}
\dfrac{f'(t)}{f(t)}\psi_+(t) + \dfrac{\partial \psi_+(t)}{\partial t}
&=&\left(\dfrac{f'(t)}{f(t)} + \dfrac{f_1'(t)}{f_1(t)} + \dfrac{f_3'(t)}{f_3(t)} + \dfrac{f_5'(t)}{f_5(t)} \right) x^{135}\\[10pt]&&-\sum_{(i,j,k)\in \mathcal{I}} \left( \dfrac{f'(t)}{f(t)} +\dfrac{f_i'(t)}{f_i(t)} + \dfrac{f_j'(t)}{f_j(t)} + \dfrac{f_k'(t)}{f_k(t)} \right)x^{ijk},
\end{array}
\end{equation}

with $\mathcal{I}=\{(1,4,6),(2,3,6),(2,4,5)\}$.

\medskip

As we mentioned before, the $\mathrm{G}_2$-geometry of the warped product imposes conditions on the $\mathrm{SU}(3)$-geometry
of the base $M^6$. Concretely, the $\mathrm{G}_2$-structure is coclosed if and only if the corresponding $\mathrm{SU}(3)$-structure lies on the space $\mathcal W_1^{\pm}\oplus\mathcal W_2^{\pm}\oplus \mathcal W_3\oplus \mathcal W_5$ (see Table~\ref{tabla1}). Notice that if we consider a one-parameter family of SU(3)-structures $(\omega(t),\, \psi_{\pm}(t))$ belonging to the previous space for any $t$, then the corresponding warped G$_2$-structure will remain coclosed for any $t$.
Now we particularize~\eqref{equacion}
for some interesting cases of $\mathrm{SU}(3)$-structures  lying on this space.

\subsection{The nearly K\"ahler case}

Recall that a nearly K\"ahler SU(3)-structure satisfies 
\begin{equation}\label{su3-nK}
d\omega= -\frac32\,\sigma_0\,\psi_+,\quad d\psi_+=0,\quad d\psi_- = \sigma_0 \omega^2.
\end{equation}
In particular, $\sigma_2=\nu_3=0$.
Particularizing~\eqref{equacion} for  $\sigma_2(t)=\nu_3(t)=0,$ we get

\begin{equation*}\label{equacion-NK}
\begin{cases}
\begin{array}{l}
\dfrac{\partial \omega^2(t)}{\partial t} = -3\sigma_0(t)^2\omega^2(t)-3d_6\sigma_0(t)\wedge\psi_-(t),\\[7pt]
\dfrac{f'(t)}{f(t)} \psi_+(t)+ \dfrac{\partial \psi_+(t)}{\partial t} =  -3\sigma_0(t)^2\psi_+(t)+2d_6\sigma_0(t)\wedge \omega(t).
\end{array}
\end{cases}
\end{equation*}

Observe that with this particular ansatz, the left-hand side of the first equation above  is a combination of the 4-forms $x^{1234}, x^{1256}$ and $x^{3456}$ (see~\eqref{dw2}); however, it can be easily proven that if $\eta$ is a one-form, then $\eta\wedge \psi_-(t)$ never belongs to the space generated by $x^{1234}, x^{1256}$ and $x^{3456}$, unless $\eta=0$.  Therefore, we need $d_6\sigma_0(t)=0$, which means that $\sigma_0(t)$ is constant as a differentiable function on $M^6$.    

Now, the previous system simplifies as:
\begin{equation}\label{equacion-NK2}
 \begin{cases}
\begin{array}{l}
 \dfrac{\partial \omega^2(t)}{\partial t} = -3\sigma_0(t)^2\omega^2(t),\\[5pt]
\frac{f'(t)}{f(t)} \psi_+(t)+ \dfrac{\partial \psi_+(t)}{\partial t} =  -3\sigma_0(t)^2\psi_+(t).
 \end{array}
 \end{cases}
\end{equation}

Let us solve this system (as before, we denote $f_i(t)f_j(t)$ simply as $f_{ij}$).

\begin{lemma}\label{lemma1}
If $\dfrac{\partial \omega^2(t)}{\partial t} = -3\sigma_0(t)^2\,\omega^2(t)$,  then, $f_{12} = f_{34} = f_{56},$ where $f_i(t)$ are the functions in~\eqref{evolution}.
\end{lemma}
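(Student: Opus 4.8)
The plan is to reduce the differential-form identity to three scalar ODEs in the logarithmic derivatives of the $f_i$ and then extract the claim by a cancellation trick. First I would write out $\omega^2(t)$ explicitly in the adapted basis. Squaring $\omega(t)=x^{12}+x^{34}+x^{56}$ and using that repeated wedge factors vanish gives $\omega^2(t)=2\,(x^{1234}+x^{1256}+x^{3456})$. Under the ansatz~\eqref{evolution} each four-form factorises as $x^{ijkl}=f_if_jf_kf_l\,h^{ijkl}$, so $x^{1234}$, $x^{1256}$ and $x^{3456}$ are nonzero scalar multiples of the three fixed forms $h^{1234},h^{1256},h^{3456}$, which are linearly independent because $\{h^1,\dots,h^6\}$ is a coframe; in particular the three four-forms are linearly independent for every $t$.

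Next I would insert~\eqref{dw2} into the left-hand side of the hypothesis $\frac{\partial}{\partial t}\omega^2(t)=-3\sigma_0(t)^2\,\omega^2(t)$ and compare coefficients of the three independent four-forms $x^{1234},x^{1256},x^{3456}$. Setting $a_i:=f_i'(t)/f_i(t)=(\log f_i)'$, this yields the system $a_1+a_2+a_3+a_4=a_1+a_2+a_5+a_6=a_3+a_4+a_5+a_6=-3\sigma_0(t)^2$. The key observation is that subtracting these three equations pairwise eliminates the a priori unknown right-hand side $-3\sigma_0(t)^2$ altogether, leaving $a_1+a_2=a_3+a_4=a_5+a_6$. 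Since $a_i+a_j=(\log f_i)'+(\log f_j)'=(\log f_{ij})'$, this says precisely that $(\log f_{12})'=(\log f_{34})'=(\log f_{56})'$.

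Finally I would integrate: the equality of these logarithmic derivatives means that $f_{12}/f_{34}$ and $f_{34}/f_{56}$ are constant in $t$. Evaluating at $t=0$, where the normalisation $f_i(0)=1$ forces $f_{12}(0)=f_{34}(0)=f_{56}(0)=1$, pins each ratio to the value $1$, whence $f_{12}=f_{34}=f_{56}$ for all $t$, as claimed. There is no genuine obstacle here: the argument is elementary linear algebra in the logarithmic derivatives followed by a one-line integration. The only point needing (minor) care is the cancellation step, which is exactly what makes the conclusion independent of the unknown function $\sigma_0(t)$; everything else rests on the linear independence of $x^{1234},x^{1256},x^{3456}$ established in the first step and on the coefficient form of~\eqref{dw2}.
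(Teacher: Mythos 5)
Your proof is correct, but it takes a genuinely different route from the paper's. The paper applies the symplectic (Lefschetz) operator $L_t(\eta)=\eta\wedge\omega(t)$: writing $\frac{\partial}{\partial t}\omega^2+3\sigma_0^2\omega^2 = L_t\left(2\frac{\partial}{\partial t}\omega+3\sigma_0^2\omega\right)$ and invoking the injectivity of $L$ on $q$-forms for $q\le n-1$ (a result of Bouche, cited as \cite{Bo}; here $q=2$, $n=3$), it reduces the $4$-form hypothesis to the $2$-form equation $\frac{\partial}{\partial t}\omega(t)=-\frac32\sigma_0(t)^2\omega(t)$, and only then compares coefficients via~\eqref{dw} to get $d(\ln f_{12})=d(\ln f_{34})=d(\ln f_{56})=-\frac32\sigma_0(t)^2$ before integrating. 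You instead stay at the level of $4$-forms, compare coefficients of the independent forms $x^{1234},x^{1256},x^{3456}$ in~\eqref{dw2}, and eliminate the unknown right-hand side by pairwise subtraction. What your argument buys is self-containedness and robustness: it needs no external injectivity theorem, and the cancellation shows the conclusion $f_{12}=f_{34}=f_{56}$ holds for any equation of the form $\frac{\partial}{\partial t}\omega^2(t)=\lambda(t)\,\omega^2(t)$, irrespective of what $\lambda$ is. What the paper's route buys is the stronger intermediate identity: it pins down the common value of the logarithmic derivatives as $-\frac32\sigma_0(t)^2$, which is exactly the scalar ODE (e.g.\ $4F'(t)F(t)=-3\sigma_0^2$ in Proposition~\ref{propoNK}) that is needed when the flow is actually solved; with your cancellation that information is deliberately discarded and would have to be recovered afterwards by substituting back. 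Both proofs finish identically, integrating the equality of logarithmic derivatives and using the normalisation $f_i(0)=1$.
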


\begin{proof}
Using the symplectic operator $L:\Omega^q(M)\to \Omega^{q+2}(M)$ defined by $L(\eta)=\eta\wedge\omega$, the previous equation can be expressed as:
$$\dfrac{\partial \omega^2(t)}{\partial t} + 3\sigma_0(t)^2\,\omega^2(t)=0\Longleftrightarrow L_t\left(2\,\dfrac{\partial \omega(t)}{\partial t} +3\sigma_0(t)^2\,\omega(t) \right)=0.$$
It happens that $L$ is injective for $q\leq n-1$, being $\dim M=2n$ \cite{Bo}.  Since in our case $n=3$, we have that $$L_t\left(2\,\dfrac{\partial \omega(t)}{\partial t} + 3\sigma_0(t)^2\,\omega(t) \right)=0\Longleftrightarrow \dfrac{\partial \omega(t)}{\partial t}=-\frac32\sigma_0(t)^2\,\omega(t).$$

Using~\eqref{dw}, $\dfrac{\partial \omega(t)}{\partial t}=-\frac32\sigma_0(t)^2\,\omega(t)$ if and only if
$$\left(\frac{f_1'(t)}{f_1(t)} + \frac{f_2'(t)}{f_2(t)}\right)= \left(\frac{f_3'(t)}{f_3(t)} + \frac{f_4'(t)}{f_4(t)}\right) = \left(\frac{f_5'(t)}{f_5(t)} + \frac{f_6'(t)}{f_6(t)}\right)=-\frac32\sigma_0(t)^2,$$ which is equivalent to say
\begin{equation*}\label{una}
d(\ln f_{12})=d(\ln f_{34}) = d(\ln f_{56})=-\frac32\sigma_0(t)^2.
\end{equation*}
  In particular, $$\frac{f_{12}}{f_{34}} = c_1,\quad \frac{f_{12}}{f_{56}} = c_2,\quad \frac{f_{34}}{f_{56}} = c_3,$$ where $c_i$ are
  constants.  Since $f_i(0) = 1$, we obtain that $f_{12} = f_{34} = f_{56}.$
\end{proof}
\medskip

For the second equation we get:

\begin{lemma}\label{lemma2}
If $\frac{f'(t)}{f(t)} \psi_+(t)+ \dfrac{\partial \psi_+(t)}{\partial t} =  -3\sigma_0(t)^2\,\psi_+(t)$, then, $f_1(t) = f_2(t),\,\,f_3(t) = f_4(t),\,\,f_5(t) = f_6(t),$ where $f_i(t)$ are the functions in~\eqref{evolution}.
\end{lemma}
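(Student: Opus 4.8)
The plan is to compare coefficients on both sides of the hypothesis in the global coframe $\{x^i(t)\}=\{f_i(t)h^i\}$ of~\eqref{evolution}, which reduces the statement to an elementary homogeneous linear system in the logarithmic derivatives of the $f_i(t)$.

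First I would insert the explicit form of the left-hand side, which has already been computed in~\eqref{segunda}. By~\eqref{SU3-x} we have $\psi_+(t)=x^{135}-x^{146}-x^{236}-x^{245}$, so the right-hand side $-3\sigma_0(t)^2\psi_+(t)$ is itself a linear combination of the four decomposable $3$-forms $x^{135},x^{146},x^{236},x^{245}$. These four monomials belong to the basis of $\Omega^3$ determined by the (global, by parallelizability) coframe $\{x^i(t)\}$, hence are linearly independent; therefore the hypothesis is \emph{equivalent} to equating the scalar coefficient of each of them on the two sides.

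Writing $g_i(t):=f_i'(t)/f_i(t)=(\ln f_i)'(t)$, each of the four coefficient equations takes the shape ``$f'/f+(\text{sum of three }g_i)=-3\sigma_0^2$'', with index triples $(1,3,5),(1,4,6),(2,3,6),(2,4,5)$. The point is that the coefficient of $x^{135}$ in $\psi_+$ has the opposite sign to the other three, so after transposing the $\tfrac{f'}{f}\psi_+$ term the four right-hand sides are \emph{identically equal} to the common value $-3\sigma_0^2-f'/f$. Subtracting the equations pairwise cancels this common value and leaves a homogeneous system in the three differences $a:=g_1-g_2$, $b:=g_3-g_4$, $c:=g_5-g_6$, namely $a+b=a+c=b+c=0$, whose only solution is $a=b=c=0$.

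Finally, integrating the resulting identities $g_1=g_2$, $g_3=g_4$, $g_5=g_6$ together with the normalization $f_i(0)=1$ from~\eqref{evolution} yields $f_1(t)=f_2(t)$, $f_3(t)=f_4(t)$, $f_5(t)=f_6(t)$, as claimed. There is no genuine obstacle here; the only care required is the bookkeeping of which index triple is paired with which monomial, and keeping track of the sign coming from the three minus signs in $\psi_+$. Once the four scalar equations are written down correctly, the conclusion is immediate linear algebra followed by a trivial integration. (One may note the structural parallel with Lemma~\ref{lemma1}, where the analogous coefficient comparison on $\partial_t\omega^2$ together with the injectivity of the Lefschetz map gave $f_{12}=f_{34}=f_{56}$; combined with the present lemma this will pin down all six functions up to a single scaling.)
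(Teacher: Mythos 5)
Your proof is correct and follows essentially the same route as the paper: expand the left-hand side in the coframe $\{x^i(t)\}$ via~\eqref{segunda}, equate the coefficients of the independent monomials $x^{135}$, $x^{146}$, $x^{236}$, $x^{245}$ (the signs cancelling on both sides), and exploit pairwise differences of the resulting four identical equations together with the normalization $f_i(0)=1$. The only difference is cosmetic: you cancel the differences at the level of the logarithmic derivatives $g_i=f_i'/f_i$ and integrate at the end, whereas the paper integrates first --- obtaining $f_{13}=f_{24}$, $f_{14}=f_{23}$, etc., hence $f_1^2=f_2^2$, $f_3^2=f_4^2$, $f_5^2=f_6^2$ --- and then removes the squares by continuity, a square-root step your variant conveniently avoids.
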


\begin{proof}
Arguing as before, if $\frac{f'(t)}{f(t)} \psi_+(t)+ \dfrac{\partial \psi_+(t)}{\partial t} =  -3\sigma_0(t)^2\,\psi_+(t)$, then:
\begin{equation*}\label{dos}
d(\ln (f(t) f_{135})) = d(\ln (f(t) f_{146})) = d(\ln (f(t) f_{236})) = d(\ln (f(t) f_{245})) = -3\sigma_0(t)^2.
\end{equation*}
In particular, observe that:
$$d(\ln (f(t) f_{ijk})) = d(\ln (f(t) f_{ipq}))\Longleftrightarrow d\left(\ln \frac{f(t) f_{ijk}}{f(t) f_{ipq}}\right) = 0\Longleftrightarrow \ln \frac{f_{jk}}{f_{pq}} = c \Longleftrightarrow  \frac{f_{jk}}{f_{pq}} = 1,$$
where $c$ is a constant and we have used the fact that $f_i(0)=1$.  So:
$$d(\ln (f(t) f_{135})) = d(\ln (f(t) f_{146})) = d(\ln (f(t) f_{236})) = d(\ln (f(t) f_{245})) \Longleftrightarrow$$
$$\begin{cases}
f_{13} = f_{24},\quad f_{14}=f_{23},\quad f_{15} = f_{26},\\
f_{16} = f_{25},\quad f_{35}=f_{46},\quad f_{36} = f_{45},
\end{cases}$$
$$
\Longleftrightarrow f_1(t)^2 = f_2(t)^2,\,\,f_3(t)^2 = f_4(t)^2,\,\,f_5(t)^2 = f_6(t)^2 \Longleftrightarrow f_1(t) = f_2(t),\,\,f_3(t) = f_4(t),\,\,f_5(t) = f_6(t),
$$ where for the last equivalence we have used that $f_i(t)$ are continuous functions satisfying $f_i(0)=1$.
\end{proof}

We can combine the two previous results to conclude that $f_i(t) = f_j(t)$ for $i,j=1,\ldots, 6$.  If we denote
$f_i(t)=F(t)$
for all $i=1,\dots,6$, then $(\omega(t),\psi_{\pm}(t))$ has the particular form:

\begin{equation}\label{equacion-NK5}
\omega(t)=F^2(t)\, \omega  ,\qquad
\psi_+(t)=F^3(t)\, \psi_+,\qquad
\psi_-(t)=F^3(t)\, \psi_-.
 \end{equation}

\begin{lemma}\label{lemma3}
Let $(\omega(t),\psi_{\pm}(t))$ be the one-parameter family of $\mathrm{SU}(3)$-structures given in \eqref{equacion-NK5} where $(\omega, \psi_{\pm})$ is a nearly K\"ahler structure. Then $(\omega(t),\psi_{\pm}(t))$
is nearly K\"ahler for all $t$ if and only if $\sigma_0(t)=\frac{\sigma_0}{F(t)}$.
\begin{proof}
Equation \eqref{equacion-NK5} implies that
$d\omega(t) = F^2(t) d\omega,$ and $d\psi_-(t)=F^3(t) d\psi_-.$
Since $(\omega, \psi_{\pm})$ is nearly K\"ahler, one has
$$d\omega(t)=-\frac{3}{2} \sigma_0 F^2(t) \psi_+, \quad \text{ } \quad d\psi_-(t)= \sigma_0 F^3(t) \omega^2,$$
or equivalently
$$d\omega(t)=-\frac{3}{2} \frac{\sigma_0}{ F(t)} \psi_+(t) \quad \text{ and } \quad d\psi_-(t)= \frac{\sigma_0}{ F(t)} \omega^2(t).$$
Therefore, $\big(\omega(t), \psi_{\pm}(t)\big)$ is nearly K\"ahler for all $t$ if and only if
$\sigma_0(t)=\frac{\sigma_0}{F(t)},$
and the result follows.
\end{proof}
\end{lemma}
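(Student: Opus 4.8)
The plan is to exploit the fact that in \eqref{equacion-NK5} the conformal factor $F(t)$ depends only on the flow parameter $t$ and not on the point of $M^6$, so that the exterior derivative commutes with multiplication by $F(t)$, treating it as a constant. First I would differentiate the three forms in \eqref{equacion-NK5} directly, pulling the scalar out of $d$:
$$d\omega(t)=F^2(t)\,d\omega,\qquad d\psi_+(t)=F^3(t)\,d\psi_+,\qquad d\psi_-(t)=F^3(t)\,d\psi_-.$$
Next I would substitute the nearly K\"ahler relations \eqref{su3-nK} for the reference structure $(\omega,\psi_{\pm})$, namely $d\omega=-\frac32\sigma_0\psi_+$, $d\psi_+=0$, and $d\psi_-=\sigma_0\omega^2$. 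This gives $d\psi_+(t)=0$ immediately, together with
$$d\omega(t)=-\tfrac32\sigma_0 F^2(t)\,\psi_+,\qquad d\psi_-(t)=\sigma_0 F^3(t)\,\omega^2.$$

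The key step is to re-express these right-hand sides in terms of the evolved forms $\omega(t),\psi_+(t)$ rather than the reference forms. Since \eqref{equacion-NK5} yields $\psi_+=F^{-3}(t)\psi_+(t)$ and $\omega^2=F^{-4}(t)\omega^2(t)$, the powers of $F$ collapse to a single factor, giving
$$d\omega(t)=-\tfrac32\,\frac{\sigma_0}{F(t)}\,\psi_+(t),\qquad d\psi_-(t)=\frac{\sigma_0}{F(t)}\,\omega^2(t).$$
Comparing these with the defining equations \eqref{su3-nK} of a nearly K\"ahler structure written for $(\omega(t),\psi_{\pm}(t))$, i.e. $d\omega(t)=-\frac32\sigma_0(t)\psi_+(t)$, $d\psi_+(t)=0$ and $d\psi_-(t)=\sigma_0(t)\omega^2(t)$, I would read off that the family satisfies the nearly K\"ahler conditions with torsion function $\sigma_0(t)$ precisely when $\sigma_0(t)=\sigma_0/F(t)$, which is the asserted equivalence.

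I do not anticipate a genuine obstacle: the argument is a direct computation, and the only point requiring care is the bookkeeping of the powers of $F(t)$, in particular remembering that the scaling of $\omega^2$ is $F^4(t)$ while that of $\psi_{\pm}$ is $F^3(t)$, so that \emph{both} equations produce the same factor $1/F(t)$ and hence a single consistent value of $\sigma_0(t)$. A secondary subtlety worth flagging is that $\sigma_0(t)$ here is the same torsion-form function appearing in the coflow system \eqref{equacion}, so the equivalence is a genuine constraint on that prescribed function rather than a tautology.
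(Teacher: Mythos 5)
Your proposal is correct and takes essentially the same route as the paper's proof: differentiate \eqref{equacion-NK5} using that $F(t)$ is constant on $M^6$, substitute the nearly K\"ahler relations \eqref{su3-nK} for the reference structure, and rewrite the right-hand sides in terms of $\psi_+(t)$ and $\omega^2(t)$ so that both equations yield the single factor $\sigma_0/F(t)$. Your additional explicit check that $d\psi_+(t)=0$ and your care with the $F^3$ versus $F^4$ scaling are exactly the bookkeeping the paper's proof performs (or leaves implicit), so there is no substantive difference.
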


In the next result we show how to solve the Laplacian coflow in this particular case.
\begin{proposition}\label{propoNK}Let $M^6$ be a manifold endowed with a nearly K\"ahler structure $(\omega, \psi_{\pm})$.
  Then the one-parameter family of warped $\mathrm{G}_2$-structures on $M^6 \times_f S^1$ given by
$$\varphi(t)=\left(1-\frac{3\sigma_0^2}{2} \, t\right)^{3/2}\left(c\, \omega\wedge ds - \psi_-\right)\quad \text{and} \quad \ast_t\varphi(t)=\left(1-\frac{3\sigma_0^2}{2} \, t\right)^{2}\left(\frac12\omega^2 +c \psi_+\wedge ds\right)$$
is a solution of the Laplacian coflow for $t\in\left(-\infty, \frac{2}{3\sigma^2_0}\right)$, being $f(t)=c\Big(1-\frac{3\sigma_0^2}{2} \, t\Big)^{1/2},\,\, c\in\mathbb R^*$.
\begin{proof}
From Lemmas \ref{lemma1}, \ref{lemma2} and \ref{lemma3}, the system \eqref{equacion-NK2} with $(\omega(t),\psi_{\pm}(t))$
nearly K\"ahler for all $t$ is equivalent to
\begin{equation*}\label{equacion-NK6}
 \begin{cases}
\begin{array}{l}
 4F'(t)F(t)=-3 \sigma_0^2,\\[5pt]
\frac{f'(t)}{f(t)}F^2(t)+3F'(t)F(t)=-3\sigma_0^2.
 \end{array}
 \end{cases}
 \end{equation*}
 whose solution is
 $$F(t)= \Big(1-\frac{3\sigma_0^2}{2} \, t\Big)^{1/2}, \text{ \quad  \quad } f(t)=c\,\Big(1-\frac{3\sigma_0^2}{2} \, t\Big)^{1/2}$$
 and the result follows.
\end{proof}
\end{proposition}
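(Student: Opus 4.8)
The plan is to use the structural reductions already established to collapse the coflow system~\eqref{equacion-NK2} to a pair of scalar ODEs for $F(t)$ and $f(t)$, solve them explicitly, and then substitute back into~\eqref{phi-warped} and~\eqref{star-phi-warped}. First I would invoke Lemmas~\ref{lemma1} and~\ref{lemma2}: together they force $f_i(t)=f_j(t)$ for all $i,j$, so the family takes the homothetic form~\eqref{equacion-NK5} governed by a single scaling function $F(t)$ with $F(0)=1$. Demanding that $(\omega(t),\psi_\pm(t))$ stay nearly K\"ahler for all $t$, Lemma~\ref{lemma3} then fixes $\sigma_0(t)=\sigma_0/F(t)$. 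At this point $\omega^2(t)$ and $\psi_+(t)$ are pure powers of $F$ times their initial values, so each side of~\eqref{equacion-NK2} becomes a scalar multiple of $\omega^2(t)$, resp.\ $\psi_+(t)$, and the form equations decouple into scalar ones.

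Next I would read off that scalar system. Using $\omega^2(t)=F^4(t)\,\omega^2$ gives $\partial_t\omega^2(t)=\tfrac{4F'}{F}\,\omega^2(t)$, and with $\sigma_0(t)^2=\sigma_0^2/F^2$ the first equation of~\eqref{equacion-NK2} reduces to $4F'F=-3\sigma_0^2$. Similarly $\psi_+(t)=F^3(t)\,\psi_+$ turns the second equation into $\tfrac{f'}{f}F^2+3F'F=-3\sigma_0^2$. The first ODE decouples: since $(F^2)'=2F'F=-\tfrac{3}{2}\sigma_0^2$ is constant, integrating with $F(0)=1$ yields $F^2(t)=1-\tfrac{3\sigma_0^2}{2}t$. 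Substituting $3F'F=-\tfrac{9}{4}\sigma_0^2$ into the second ODE leaves $\tfrac{f'}{f}F^2=-\tfrac34\sigma_0^2=F'F$, i.e.\ $\tfrac{f'}{f}=\tfrac{F'}{F}$, so $f(t)=cF(t)$ for a constant $c\in\R^*$, which is the claimed warping function.

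Finally I would record the interval and verify the closed forms. Reality and non-vanishing of $F$ (hence of the induced metric and of $f$) require $1-\tfrac{3\sigma_0^2}{2}t>0$, i.e.\ $t\in\left(-\infty,\tfrac{2}{3\sigma_0^2}\right)$, which is the asserted long-time interval. Plugging $F$ and $f=cF$ into~\eqref{phi-warped} and~\eqref{star-phi-warped} with $(\alpha,\beta)=(0,1)$ gives $\varphi(t)=F^3(t)\big(c\,\omega\wedge ds-\psi_-\big)$ and $\ast_t\varphi(t)=F^4(t)\big(\tfrac12\omega^2+c\,\psi_+\wedge ds\big)$, which match the stated expressions after writing $F^3=(1-\tfrac{3\sigma_0^2}{2}t)^{3/2}$ and $F^4=(1-\tfrac{3\sigma_0^2}{2}t)^{2}$. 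The genuine content sits in the preceding lemmas; the one point demanding care here is the compatibility of the two ODEs, namely that the value of $F'F$ forced by the first equation is exactly what renders the second equation integrable for $f$, together with consistency of the normalization $\sigma_0(t)=\sigma_0/F(t)$ throughout, so that $\varphi(t)$ stays coclosed and genuinely solves (LcF) on the whole interval rather than merely at $t=0$.
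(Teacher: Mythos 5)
Your proposal is correct and takes essentially the same approach as the paper's proof: both invoke Lemmas~\ref{lemma1}, \ref{lemma2} and \ref{lemma3} to reduce the coflow system~\eqref{equacion-NK2} to the scalar ODE system $4F'F=-3\sigma_0^2$, $\tfrac{f'}{f}F^2+3F'F=-3\sigma_0^2$, and then solve it to get $F(t)=\bigl(1-\tfrac{3\sigma_0^2}{2}t\bigr)^{1/2}$ and $f(t)=cF(t)$. Your extra steps (integrating $(F^2)'$ explicitly, deducing $f'/f=F'/F$, and checking the interval of definition and the final substitution into \eqref{phi-warped} and \eqref{star-phi-warped}) simply spell out what the paper leaves as routine verification.
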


\begin{remark}
Not many examples of nearly K\"ahler manifolds are known.  Recently, new complete examples on $S^6$ and $S^3\times S^3$ have been described in \cite{FosHan} and \cite{Sc}.  Next we solve the Laplacian coflow using an explicit example of nearly K\"ahler structure appeared in \cite{Sc}.
\end{remark}

\begin{example}
 Consider the sphere $S^3$, viewed as the Lie group $\mathrm{SU}(2)$ with the basis of left-invariant one-forms $\{\lambda^1, \lambda^2, \lambda^3\}$ satisfying
$$d\lambda^1=\lambda^{23}, \quad d\lambda^2=-\lambda^{13}, \quad d\lambda^3=\lambda^{12}.$$
Thus, $\frak{su}(2)\oplus\frak{su}(2) $ is the Lie algebra of $S^3 \times S^3$ and its structure equations are:
$$\frak{su}(2)\oplus\frak{su}(2) = (\lambda^{23},-\lambda^{13},\lambda^{12}, \nu^{23}, -\nu^{13}, \nu^{12})$$
with $\{\nu^i\}$ the basis of left-invariant $1$-forms on the second sphere. The pair $(\omega, \psi_+)$ with
$$\omega=\frac{\sqrt{3}}{18}(\lambda^1\wedge \nu^1+\lambda^2\wedge \nu^2+\lambda^3\wedge \nu^3),$$ $$
\psi_+=\frac{\sqrt{3}}{54}(\lambda^{23}\wedge \nu^1-\lambda^1\wedge \nu^{23}-\lambda^{13}\wedge \nu^2+\lambda^{2}\wedge\nu^{13}+\lambda^{12}\wedge\nu^{3}-\lambda^{3}\wedge\nu^{12}),$$
where $\omega$ is the K\"ahler form and $\psi_+$ is the real part of the complex (3,0)-form, defines a nearly K\"ahler SU(3)-structure on $S^3 \times S^3$. Observe that the basis $\{\lambda^i,\, \nu^i\}$ is not adapted to the SU(3)-structure.

Consider $\{h^1, \dots, h^6\}$ the basis of left-invariant $1$-forms on $S^3 \times S^3$ given by
$$h^1=\frac13\lambda^1 - \frac16 \nu^1,\quad h^2=\frac{\sqrt{3}}{6}\nu^1,\quad h^3=\frac13\lambda^2 - \frac16 \nu^2,\quad h^4=\frac{\sqrt{3}}{6}\nu^2,\quad  h^5=\frac{\sqrt{3}}{6}\nu^3,\quad h^6=-\frac13\lambda^3 + \frac16 \nu^3.$$
This basis is adapted to the SU(3)-structure and $(\omega, \psi_+)$ turns out to be nearly K\"ahler with $\sigma_0 = -2$.
Therefore, in view of Proposition \ref{propoNK}, the one-parameter family of warped $\mathrm{G}_2$-structures on $(S^3\times S^3) \times_f S^1$ given by
$$\varphi(t)=\Big(1-6 \, t\Big)^{3/2}\Big[c (h^{12}+h^{34}+h^{56})\wedge ds + h^{246} - h^{235} - h^{136} -h^{145}  \Big]$$
and
$$\ast_t\varphi(t)=\Big(1-6 \, t\Big)^{2}\Big[h^{1234}+h^{1256}+h^{3456}+c (h^{135}-h^{146}-h^{236}-h^{245})\wedge ds   \Big],$$
where $f(t) = c\,\left(1-6t\right)^{\frac12}$, is a solution of the Laplacian coflow for all $t \in \big(-\infty, \frac{1}{6}\big)$.
\end{example}

\subsection{The symplectic half-flat case}
Recall that a symplectic half-flat SU(3)-structure satisfies 
\begin{equation}\label{su3-sH}
d\omega= 0,\quad d\psi_+=0,\quad d\psi_- = -\sigma_2\wedge\omega.
\end{equation}
 In particular, $\sigma_0=\nu_3=0$.
Particularizing~\eqref{equacion} for  $\sigma_0(t)=\nu_3(t)=0,$ we get
\begin{equation}\label{equacion-SHF}
  \begin{cases}
\begin{array}{l}
 \dfrac{\partial \omega^2(t)}{\partial t} = 0,\\[7pt]
\dfrac{f'(t)}{f(t)} \psi_+(t)+ \dfrac{\partial \psi_+(t)}{\partial t} =  d_6\sigma_2(t).
 \end{array}
 \end{cases}
\end{equation}

Now, we get necessary conditions in order to solve the Laplacian coflow.  Arguing similarly as Lemma~\ref{lemma1} and providing that  $\sigma_0(t)=0$, it is straightforward to see that  the first equation of~\eqref{equacion-SHF} holds if and only if
\begin{equation}\label{condition-f}
f_{2}(t) = \frac{1}{f_1(t)},\quad f_{4}(t) = \frac{1}{f_3(t)},\quad f_{6}(t) = \frac{1}{f_5(t)}.
\end{equation}
The following technical result, that makes use of equation~\eqref{segunda}, states how to solve the coflow in the symplectic half-flat case:

\begin{lemma}\label{cor:SHF}
Consider a warped coclosed $\mathrm{G}_2$-structure $\varphi$ on $M^6\times_f S^1$ where $(\omega, \psi_{\pm})$ is a symplectic half-flat $\mathrm{SU}(3)$-structure.  Then $\varphi(t)$, given by \eqref{varphi-t}, is a solution of the coflow~\eqref{equacion-SHF} using the ansatz~\eqref{evolution} if and only if $f(t),\, f_1(t),\, f_3(t)$ and $f_5(t)$ satisfy:
\begin{equation}\label{equacion-SHF2}
\begin{cases}
\begin{array}{l}
A_{135}(t) = \dfrac{f'(t)}{f(t)} + \dfrac{f_1'(t)}{f_1(t)} + \dfrac{f_3'(t)}{f_3(t)} + \dfrac{f_5'(t)}{f_5(t)},\qquad
A_{146}(t) = \dfrac{f'(t)}{f(t)} + \dfrac{f_1'(t)}{f_1(t)} - \dfrac{f_3'(t)}{f_3(t)} - \dfrac{f_5'(t)}{f_5(t)},\\[10pt]
A_{236}(t) = \dfrac{f'(t)}{f(t)} - \dfrac{f_1'(t)}{f_1(t)} + \dfrac{f_3'(t)}{f_3(t)} - \dfrac{f_5'(t)}{f_5(t)},\qquad
A_{245}(t) = \dfrac{f'(t)}{f(t)} - \dfrac{f_1'(t)}{f_1(t)} - \dfrac{f_3'(t)}{f_3(t)} + \dfrac{f_5'(t)}{f_5(t)},\\
\end{array}
\end{cases}
\end{equation}
where functions $A_{135}(t), A_{146}(t), A_{236}(t), A_{245}(t)$ are such that $$d_6\sigma_2(t)= A_{135}(t) x^{135} - A_{146}(t) x^{146} - A_{236}(t) x^{236} - A_{245}(t)x^{245},$$ and $(\omega(t),\, \psi_{\pm}(t))$ is symplectic half-flat for all $t$.
 \end{lemma}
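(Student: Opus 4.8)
The plan is to read the reduced coflow \eqref{equacion-SHF} as two independent conditions on the functions $f,f_1,f_3,f_5$, by expanding everything in the fixed basis of $3$-forms coming from the ansatz \eqref{evolution}. The whole argument is a comparison of coefficients once the correct $4$-dimensional space of $3$-forms is identified, so I would first set up that comparison and then isolate the single nontrivial point.

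First I would dispose of the first equation of \eqref{equacion-SHF}. Since a symplectic half-flat structure has $\sigma_0(t)=0$, the discussion preceding the statement (the analogue of Lemma \ref{lemma1}, using injectivity of $L=\,\cdot\wedge\omega$ on $\Omega^2$) shows that $\partial_t\omega^2(t)=0$ is equivalent to \eqref{condition-f}, that is $f_2=1/f_1$, $f_4=1/f_3$, $f_6=1/f_5$. I would record the consequence that under \eqref{condition-f} one has $f_1f_2=f_3f_4=f_5f_6=1$, so $\omega(t)=\omega$ is constant and stays symplectic, and moreover $f_2'/f_2=-f_1'/f_1$, $f_4'/f_4=-f_3'/f_3$, $f_6'/f_6=-f_5'/f_5$; this is exactly why only $f_1,f_3,f_5$ (and $f$) appear in \eqref{equacion-SHF2}.

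Next I would treat the second equation. Its left-hand side is given by \eqref{segunda} as a combination of the four monomials $x^{135},x^{146},x^{236},x^{245}$, and substituting the relations $f_{2k}'/f_{2k}=-f_{2k-1}'/f_{2k-1}$ into those coefficients turns them, after a short sign bookkeeping, into precisely $A_{135},A_{146},A_{236},A_{245}$ of \eqref{equacion-SHF2}. Thus the left-hand side equals $A_{135}x^{135}-A_{146}x^{146}-A_{236}x^{236}-A_{245}x^{245}$, and the second equation of \eqref{equacion-SHF} becomes the identity of $3$-forms $A_{135}x^{135}-A_{146}x^{146}-A_{236}x^{236}-A_{245}x^{245}=d_6\sigma_2(t)$. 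Comparing components along the four monomials then gives \eqref{equacion-SHF2}, with the right-hand coefficients named $A_{ijk}(t)$ as in the statement; the requirement that $(\omega(t),\psi_\pm(t))$ be symplectic half-flat for all $t$ is what guarantees $\sigma_0(t)=\nu_3(t)=0$ (so that \eqref{equacion-SHF}, and not the full \eqref{equacion}, is the relevant system) and that $\sigma_2(t)$ is a well-defined torsion form.

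The main obstacle is the legitimacy of this last comparison: the left-hand side lies in the $4$-dimensional space $V=\langle x^{135},x^{146},x^{236},x^{245}\rangle$, whereas $d_6\sigma_2(t)$ a priori need not. Here I would exploit the half-flat hypotheses. Differentiating $\sigma_2(t)\wedge\psi_+(t)=0$ (which holds because $\sigma_2(t)\in\Omega^2_8$) and using $d_6\psi_+(t)=0$ gives $d_6\sigma_2(t)\wedge\psi_+(t)=0$, while differentiating $d_6\psi_-(t)=-\sigma_2(t)\wedge\omega$ and using $d_6\omega=0$ gives $d_6\sigma_2(t)\wedge\omega=0$; together these place $d_6\sigma_2(t)$ in $\Omega^3_+\oplus\Omega^3_{12}$. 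The delicate step, which I expect to be the crux, is to confine the $\Omega^3_{12}$-part of $d_6\sigma_2(t)$ to $V\cap\Omega^3_{12}$, i.e. to show that keeping the structure symplectic half-flat along the ansatz forces $d_6\sigma_2(t)\in V$, so that the $A_{ijk}(t)$ are well defined and no transverse component obstructs the equation. In the forward direction this is automatic, since a solution makes $d_6\sigma_2(t)$ equal to a form in $V$; in the converse direction it is part of the symplectic half-flat data recorded in the statement. Once this is in place, both implications follow at once: a solution forces the coefficients to agree, giving \eqref{equacion-SHF2}, and conversely \eqref{condition-f} together with \eqref{equacion-SHF2} and the standing symplectic half-flat condition reconstruct both equations of \eqref{equacion-SHF}.
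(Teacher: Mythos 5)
Your proof is correct and takes essentially the same route as the paper, which disposes of the first equation of \eqref{equacion-SHF} via the injectivity of the symplectic operator $L$ (yielding \eqref{condition-f}) and then obtains \eqref{equacion-SHF2} by comparing the coefficients of \eqref{segunda}, after substituting $f_{2k}'/f_{2k}=-f_{2k-1}'/f_{2k-1}$, with those of $d_6\sigma_2(t)$ in the monomials $x^{135}, x^{146}, x^{236}, x^{245}$. Your additional verification that $d_6\sigma_2(t)$ lies in $\Omega^3_{+}\oplus\Omega^3_{12}$, and your remark that its membership in the span of those four monomials is automatic in the forward direction and part of the hypothesis defining the $A_{ijk}(t)$ in the converse, is a careful refinement of a point the paper simply absorbs into the statement of the lemma.
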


In order to obtain examples and inspired in the solutions given in Proposition~\ref{propoNK}, we will consider the functions $f_i(t)$ of potential type, i.e.
\begin{equation}\label{f-especial}
f_i(t) = (1+kt)^{\alpha_i}
\end{equation}
with $\alpha_i$ and $k$ real numbers. Thus the solutions of the coflow are of the form:
\begin{equation}\label{varphialphabeta}
\begin{aligned}
\varphi(t) &= f(t)\left[(1+k t)^{\alpha_1 + \alpha_2 } h^{12}+ (1+k t)^{\alpha_3 + \alpha_4} h^{34} +(1+k t)^{\alpha_5 + \alpha_6 } h^{56}\right]\wedge ds \\
 &- (1+k t)^{\alpha_2 + \alpha_4 + \alpha_6} h^{246} +(1+k t)^{\alpha_2 + \alpha_3 + \alpha_5} h^{235} +(1+k t)^{\alpha_1 + \alpha_4 + \alpha_5} h^{145} +(1+k t)^{\alpha_1 + \alpha_3 + \alpha_6} h^{136},
\end{aligned}
\end{equation}
where the basis $\{h^1,\,\ldots,\,h^6\}$ is defined in \eqref{evolution}.

Next we solve the Laplacian coflow on unimodular solvable Lie algebras.

\begin{example}
Consider the Lie algebra $\frak{e}(1,1)\oplus \frak{e}(1,1)$ whose structure equations are
\begin{equation*}
\frak{e}(1,1)\oplus \frak{e}(1,1) := (0,0, -h^{14}, -h^{13}, h^{25}, -h^{26}).
\end{equation*}
The corresponding  connected and simply connected Lie group $G$  admits a left-invariant symplectic half-flat structure which is given canonically by~\eqref{adaptedBasis} in basis $\{h^i\}$.
Let us consider a one-parameter family of $\mathrm{SU}(3)$-structures given by~\eqref{SU3-x} with  
 $x^i(t)=f_i(t)h^i$ being $f_i(t)$ of potential type as in~\eqref{f-especial}.  The structure equations of $\frak{e}(1,1)\oplus \frak{e}(1,1)$ with respect to the time-dependent basis $\{x^i(t)\}$ are
\begin{equation*}
(0,0, -(1+kt)^{\alpha_3-\alpha_1-\alpha_4}x^{14}, -(1+kt)^{\alpha_4-\alpha_1-\alpha_3}x^{13}, (1+kt)^{-\alpha_2}x^{25}, -(1+kt)^{-\alpha_2}x^{26}).
\end{equation*}
In order to obtain solutions for the Laplacian coflow, and in view of \eqref{condition-f}, we can set
\begin{equation*}
\alpha_2=-\alpha_1, \qquad \alpha_4=-\alpha_3, \quad \text{ and } \quad \alpha_6=-\alpha_5.
\end{equation*}

With these values, we impose the preservation of the symplectic half-flat condition.   It is easy to verify that $d\omega(t)=0$ for all $t$;  $\psi_+(t)$ remains closed if and only if $\alpha_1=\alpha_3=0$, since 
$$d\psi_+(t)= \, \big(-(1+kt)^{\alpha_1}+(1+kt)^{-\alpha_1-2\alpha_3}\big)x^{1235}+\big(-(1+kt)^{\alpha_1}+(1+kt)^{-\alpha_1+2\alpha_3}\big)x^{1246}.$$   So, $(\omega(t), \psi_{\pm}(t))$ is symplectic half-flat for all $t$ if and only if $\alpha_1=\alpha_2=\alpha_3=\alpha_4=0.$
Observe that the structure equations are simply:
\begin{equation*}
\frak{e}(1,1)\oplus \frak{e}(1,1) := (0,0, -x^{14},-x^{13}, x^{25}, -x^{26}).
\end{equation*}

Finally, to solve the second equation of \eqref{equacion-SHF} we make use of~\eqref{equacion-SHF2}. 
Since $(\omega(t),\psi_{\pm}(t))$ is symplectic half-flat for all $t$, $\sigma_2(t)=-\ast_t d \psi_-(t)$, see~\eqref{SU3-str}, and therefore
\begin{equation*}
d\sigma_2(t)=-2x^{135}+2x^{146}+2x^{236}+2x^{245},
\end{equation*} which means that $A_{ijk}(t) = -2$.
We obtain the system
\begin{equation*}
\begin{cases}
\frac{f'(t)}{f(t)}+k\alpha_5(1+kt)^{-1}=-2, \\[5pt]
\frac{f'(t)}{f(t)}-k\alpha_5(1+kt)^{-1}=-2.
 \end{cases}
\end{equation*}
%
which can be solved taking
\begin{equation*}
\alpha_5=0 \qquad  \text{ and } \qquad f(t)=c\,e^{-2t},\quad c\in\mathbb R^*.
\end{equation*}
Therefore, the one-parameter family of $\mathrm{G}_2$-structures on $G \times_f S^1$ given by~\eqref{varphialphabeta}
\begin{equation*}
\varphi(t)=c\,e^{-2t}(h^{12}+h^{34}+h^{56})\wedge ds - h^{246} +h^{235} +h^{145} +h^{136}
\end{equation*}
is a solution of the Laplacian coflow for all $t \in \mathbb{R}$.

\end{example}

In~\cite{FMOU}, the authors classify the 6-dimensional  unimodular solvable Lie algebras admitting symplectic half-flat $\mathrm{SU}(3)$-structure  and show that all the corresponding solvable Lie groups admit a co-compact discrete
subgroup.  In addition to the Lie algebra $\frak e(1,1)\oplus \frak e(1,1)$, in terms of an adapted basis $\{h^i\}_{i=1}^6$ to the $\mathrm{SU}(3)$-structure, the structure equations of these algebras are the following:
\begin{eqnarray*}
\frg_{5,1}\oplus\mathbb R&=&(0,0,0, h^{15}, 0,h^{13}),\\
A^{-1,-1,1}_{5,7}\oplus\mathbb R&=&(h^{16}, -h^{26}, -h^{36}, h^{46}, 0,0),\\
A^{-a,-a,1}_{5,17}\oplus\mathbb R&=&(a h^{15}+h^{35}, -a h^{25}+h^{45}, -h^{15}+a h^{35}, -h^{25}-a h^{45},0,0),\\
\frg_{6, N3}&=&(0,-2 h^{35},0, -h^{15}, 0,h^{13}),\\
\frg_{6,38}^0&=&(2 h^{36},0,-h^{26}, h^{25}-h^{26}, -h^{23}-h^{24}, h^{23}),\\
\frg_{6, 54}^{0,-1}&=&\left(\dfrac{h^{16}}{\sqrt2}+h^{45}, -\dfrac{h^{26}}{\sqrt2}, h^{25}-\dfrac{h^{36}}{\sqrt2}, \dfrac{h^{46}}{\sqrt2},0,0\right),\\
\frg_{6, 118}^{0, -1, -1}&=&(-h^{15}+h^{36}, h^{25}+h^{46}, -h^{16}-h^{35},  -h^{26}+h^{45},0,0).
\end{eqnarray*}

In Table~\ref{tabla-SHF} we present long time solutions to the Laplacian coflow for $\mathrm{G}_2$-structures obtained as warped products of
solvmanifolds endowed with symplectic half-flat $\mathrm{SU}(3)$-structures. These solutions can be obtained as follows: consider
Lemma~\ref{cor:SHF} with the potential functions given in \eqref{f-especial} and a warping function also of potential type $$f(t) =c\, (1+kt)^{\beta},\quad c\in\mathbb R^*.$$ Thus, using~\eqref{equacion-SHF2}, we obtain a linear system of equations in $\alpha_i, \beta$ and $k$ that can be easily solved. Known the values of  $\alpha_i, \beta$ and $k$ and
considering \eqref{varphialphabeta} we can give an explicit description of the solutions of the Laplacian coflow for each example.  We also include the value of $d\sigma_2(t)$ in each case, necessary to compute the parameters of the solutions. 
%

\begin{table}[h!]
\renewcommand{\arraystretch}{1.5}
\begin{center}
\begin{tabular}{|c|c||c|c|c|}
\hline
Lie algebra&$d\sigma_2(t)$& $(\alpha_1,\,\ldots,\,\alpha_6)$ & $\beta$ & $k$  \\
\hline
$\frg_{5,1}\oplus\mathbb R$ &$A_{135} = -2(1+kt)^{-2\alpha_1-2\alpha_3-2\alpha_5}$&$( \frac16, -\frac16, \frac16, -\frac16, \frac16, -\frac16)$ & $\frac16$ & $-3$\\ \hline
$A_{5,7}^{-1,-1,1}\oplus\mathbb R$ &$A_{146} =A_{236}= -4(1+kt)^{2\alpha_5}$&$( 0,0,0,0, -\frac12, \frac12)$ & $\frac12$ & $-4$\\ \hline
$A_{5,17}^{-a,-a,1}\oplus\mathbb R$ &$A_{135} =A_{245}= -4a^2(1+kt)^{-2\alpha_5}$&$( 0,0,0,0, \frac12, -\frac12)$ & $\frac12$ & $-4a^2$\\ \hline
$\frg_{6,N3}$ &$A_{135} = -6(1+kt)^{-2\alpha_1-2\alpha_3-2\alpha_5}$&$( \frac16, -\frac16, \frac16, -\frac16, \frac16, -\frac16)$ & $\frac16$ & $-9$\\ \hline
$\frg_{6,38}^0$ &$A_{236} = -6(1+kt)^{2\alpha_1-4\alpha_3}$&$( -\frac16, \frac16, \frac16, -\frac16, -\frac16, \frac16)$ & $\frac16$ & $-9$\\ \hline
\multirow{2}{*}{$\frg_{6,54}^{0,-1}$} &$A_{146} =A_{236}= -2(1+kt)^{2\alpha_5}$&\multirow{2}{*}{$( -\frac12, \frac12, -\frac12, \frac12, -\frac12, \frac12)$} & \multirow{2}{*}{$\frac32$} & \multirow{2}{*}{$-1$}\\
&$A_{245} = -2(1+kt)^{2\alpha_1+2\alpha_3-2\alpha_5}$&&&\\
\hline
\multirow{2}{*}{$\frg_{6,118}^{0,-1,-1}$} &$A_{135} = A_{245} = -4(1+kt)^{-2\alpha_5}$&\multirow{2}{*}{$( 0,0,0,0, \frac12, -\frac12)$} & \multirow{2}{*}{$\frac12$} & \multirow{2}{*}{$-4$}\\
&$A_{146} = A_{236} = $&&&\\
&$-2(1+kt)^{2\alpha_5} (-1+(1+kt)^{2\alpha_1-2\alpha_3})$&&&\\
\hline
\end{tabular}
\end{center}
\caption{Solutions of the Laplacian coflow in the SHF-case}
\label{tabla-SHF}
\end{table}

\subsection{The balanced case}

Recall that a balanced SU(3)-structure satisfies 
\begin{equation}\label{su3-bA}
d\omega= \nu_3,\quad d\psi_+=0,\quad d\psi_- = 0.
\end{equation}
 In particular, $\sigma_0=\sigma_2=0$.
Particularizing~\eqref{equacion} for  $\sigma_0(t)=\sigma_2(t)=0,$ we get
\begin{equation}\label{equacion-Bal}
 \begin{cases}
\begin{array}{l}
\dfrac{\partial \omega^2(t)}{\partial t} = 2d_6(\ast_6\nu_3(t)),\\[7pt]
\dfrac{f'(t)}{f(t)} \psi_+(t)+ \dfrac{\partial \psi_+(t)}{\partial t} =  0.
\end{array}
\end{cases}
\end{equation}

In this case, we can apply Lemma~\ref{lemma2} with $\sigma_0(t)=0$ (compare the second equations in~\eqref{equacion-NK2} and \eqref{equacion-Bal}) obtaining the same conclusion, i.e., $f_{2k}(t) = f_{2k-1}(t)$ for $k=1,2,3$.  Now, similarly to Lemma~\ref{cor:SHF}, we can set:

\begin{lemma}\label{cor:balanced}
Consider a warped coclosed $\mathrm{G}_2$-structure $\varphi$ on $M^6\times_f S^1$ where $(\omega, \psi_{\pm})$ is a balanced $\mathrm{SU}(3)$-structure.  Then $\varphi(t)$, given by \eqref{varphi-t}, is a solution of the coflow~\eqref{equacion-Bal} using the ansatz~\eqref{evolution} if and only if $f(t),\, f_1(t),\, f_3(t)$ and $f_5(t)$ satisfy:
\begin{equation*}
B_{1234}(t) = 2\left( \dfrac{f_1'(t)}{f_1(t)} + \dfrac{f_3'(t)}{f_3(t)}\right), \quad
B_{1256}(t) = 2\left( \dfrac{f_1'(t)}{f_1(t)} + \dfrac{f_5'(t)}{f_5(t)}\right),\quad
B_{3456}(t) = 2\left( \dfrac{f_3'(t)}{f_3(t)} + \dfrac{f_5'(t)}{f_5(t)}\right),
\end{equation*}
where functions $B_{1234}(t),\,B_{1256} (t),\,B_{3456}(t)$ are such that $$d_6(\ast \nu_3(t) )= B_{1234}(t) x^{1234} + B_{1256}(t) x^{1256} + B_{3456}(t) x^{3456},$$ and $(\omega(t),\, \psi_{\pm}(t))$ is balanced for all $t$.
\end{lemma}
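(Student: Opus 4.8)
The plan is to treat Lemma~\ref{cor:balanced} as the balanced counterpart of Lemma~\ref{cor:SHF}, splitting the system~\eqref{equacion-Bal} into its two equations and reducing each to an identity among linearly independent wedge products in the time-dependent adapted coframe $\{x^i(t)\}$ of~\eqref{SU3-x}--\eqref{evolution}.

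I would first exploit the second equation of~\eqref{equacion-Bal}. Since $\sigma_0(t)=0$ in the balanced case it reads $\frac{f'(t)}{f(t)}\psi_+(t)+\frac{\partial}{\partial t}\psi_+(t)=0$, which is exactly the hypothesis of Lemma~\ref{lemma2} with $\sigma_0(t)=0$; as already recorded before the statement, this forces $f_{2k-1}(t)=f_{2k}(t)$ for $k=1,2,3$. Equating in addition the common coefficient furnished by~\eqref{segunda} gives
\[
\frac{f'(t)}{f(t)}+\frac{f_1'(t)}{f_1(t)}+\frac{f_3'(t)}{f_3(t)}+\frac{f_5'(t)}{f_5(t)}=0,
\]
so that $f\,f_1f_3f_5$ is constant; this is the relation that pins down the warping function $f$ once $f_1,f_3,f_5$ are known. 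Observe also that under $f_{2k-1}=f_{2k}$ the forms $\psi_{\pm}(t)$ are rescalings of the fixed forms in the basis $\{h^i\}$ by the factor $f_1f_3f_5$, which depends on $t$ only, whence $d_6\psi_{\pm}(t)=0$ is automatic and the closedness part of the balanced condition is preserved for free.

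I would then substitute the reduction $f_{2k-1}=f_{2k}$ into the direct computation~\eqref{dw2}, so that the left-hand side of the first equation of~\eqref{equacion-Bal} becomes
\[
\frac{\partial\omega^2(t)}{\partial t}=4\Big(\frac{f_1'}{f_1}+\frac{f_3'}{f_3}\Big)x^{1234}+4\Big(\frac{f_1'}{f_1}+\frac{f_5'}{f_5}\Big)x^{1256}+4\Big(\frac{f_3'}{f_3}+\frac{f_5'}{f_5}\Big)x^{3456}.
\]
Writing the right-hand side as $2\,d_6(\ast_6\nu_3(t))=2B_{1234}\,x^{1234}+2B_{1256}\,x^{1256}+2B_{3456}\,x^{3456}$ and equating the coefficients of the three linearly independent $4$-forms $x^{1234},x^{1256},x^{3456}$ yields exactly the three displayed identities; the converse implication is read off the same equality, using $f_i(0)=1$ where needed.

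The step I expect to be the main obstacle is justifying that $d_6(\ast_6\nu_3(t))$ has only the three components along $x^{1234},x^{1256},x^{3456}$, so that the coefficient comparison captures the whole first equation and produces no hidden extra constraint. This is precisely where the requirement that $(\omega(t),\psi_{\pm}(t))$ be balanced for every $t$ enters: it keeps $\nu_3(t)=d\omega(t)$ inside $\Omega^3_{12}(M^6)$, hence $\ast_6\nu_3(t)$ as well. Since the left-hand side of the first equation automatically lies in $\mathrm{span}\{x^{1234},x^{1256},x^{3456}\}$, the equation can hold only if $2\,d_6(\ast_6\nu_3(t))$ does too; the functions $B_{1234},B_{1256},B_{3456}$ are then well defined as its components and the asserted equivalence follows. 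In the concrete examples one verifies this span condition directly from the structure equations.
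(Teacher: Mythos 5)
Your proof is correct and follows essentially the same route as the paper (which states this lemma without proof, as the balanced analogue of Lemma~\ref{cor:SHF}): reduce the second equation of~\eqref{equacion-Bal} via Lemma~\ref{lemma2} with $\sigma_0(t)=0$ to get $f_{2k-1}(t)=f_{2k}(t)$, then equate coefficients of the linearly independent forms $x^{1234}$, $x^{1256}$, $x^{3456}$ in~\eqref{dw2} against $2\,d_6(\ast_6\nu_3(t))$, with the well-definedness of the $B_{ijkl}(t)$ handled exactly as you do. In fact your argument is slightly more complete than the paper's displayed system: the scalar relation $\frac{f'(t)}{f(t)}+\frac{f_1'(t)}{f_1(t)}+\frac{f_3'(t)}{f_3(t)}+\frac{f_5'(t)}{f_5(t)}=0$ (i.e.\ $f\,f_1f_3f_5$ constant) that you extract from~\eqref{segunda} is genuinely part of the ``if and only if'' — without it the three $B$-equations impose no condition on $f(t)$ at all — and it is indeed satisfied by all the solutions of Table~\ref{tabla-balanced}, where $\beta+\alpha_1+\alpha_3+\alpha_5=0$ in every row.
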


\medskip

The examples that we present in this case are the 6-dimensional nilpotent Lie algebras admitting balanced $\mathrm{SU}(3)$-structures, that are classified in~\cite{UV}.  In terms of an adapted basis to the balanced $\mathrm{SU}(3)$-structure, the structure equations are:
\begin{eqnarray*}
\frak h_2&=&(0,0,0,0,2 h^{12}+\left(2 \sqrt{2}-2\right) h^{13}+\left(-2-2 \sqrt{2}\right) h^{24}-2 h^{34}, 4 \sqrt{2} h^{12}+4 \sqrt{2} h^{23}-4 \sqrt{2} h^{34}),\\
\frak h_3&=&(0,0,0,0,0,-2h^{12}+2h^{34}),\\
\frak h_4&=&(0,0,0,0,2h^{13},h^{14}+h^{23}),\\
\frak h_5&=&(0,0,0,0, h^{13}-h^{24}, h^{14}+h^{23}),\\
\frak h_6&=&(0,0,0,0, h^{13},h^{14}),\\
\frak h_{19}^-&=&(0,0,-h^{15},-h^{25},0, -h^{13}-h^{24}).
\end{eqnarray*}

We present long time solutions for the Laplacian coflow of $\mathrm{G}_2$-structures obtained as warped products of balanced nilmanifolds endowed with $\mathrm{SU}(3)$-structures.  These solutions remain balanced for any $t$. As before,  with the notation in Lemma~\ref{cor:balanced} and functions of potential type~\eqref{f-especial} giving an explicit description of these solutions is equivalent to obtain the values of the parameters $\alpha_i, \beta$ and $k$. Solving the corresponding linear equations these values are given in Table~\ref{tabla-balanced}.  The solutions $\varphi(t)$ of the coflow are of the form~\eqref{varphialphabeta}.  We also include the value of $d\ast \nu_3(t)$ in each case, necessary to compute the parameters of the solutions.

\begin{table}[h!]
\renewcommand{\arraystretch}{1.5}
\begin{center}
\begin{tabular}{|c|c||c|c|c|}
\hline
Lie algebra&$d\ast \nu_3(t)$& $(\alpha_1,\,\ldots,\,\alpha_6)$ & $\beta$ & $k$  \\
\hline
$\frh_2$&$B_{1234} = -128(1+kt)^{-4\alpha_1+2\alpha_5}$&$( \frac16, \frac16, \frac16, \frac16, -\frac16, -\frac16)$ & $-\frac16$ & $-192$\\
\hline
$\frh_3$ &$B_{1234} = -8(1+kt)^{-4\alpha_1+2\alpha_5}$&$( \frac16, \frac16, \frac16, \frac16, -\frac16, -\frac16)$ & $-\frac16$ & $-12$\\ \hline
$\frh_4$ &$B_{1234} = -6(1+kt)^{-2\alpha_1-2\alpha_3 + 2\alpha_5}$&$( \frac16, \frac16, \frac16, \frac16, -\frac16, -\frac16)$ & $-\frac16$ & $-9$\\ \hline
$\frh_5$ &$B_{1234} = -4(1+kt)^{-2\alpha_1-2\alpha_3+2\alpha_5}$&$( \frac16, \frac16, \frac16, \frac16, -\frac16, -\frac16)$ & $-\frac16$ & $-6$\\ \hline
$\frh_6$ &$B_{1234} = -2(1+kt)^{-2\alpha_1-2\alpha_3+2\alpha_5}$&$( \frac16, \frac16, \frac16, \frac16, -\frac16, -\frac16)$ & $-\frac16$ & $-3$\\ \hline
\multirow{2}{*}{$\frh_{19}^-$} &$B_{1234} = -2(1+kt)^{-2\alpha_1-2\alpha_3+2\alpha_5}$&\multirow{2}{*}{$( \frac12, \frac12, 0,0,0,0)$} & \multirow{2}{*}{$-\frac12$} & \multirow{2}{*}{$-2$}\\
&$B_{1256} = -2(1+kt)^{-2\alpha_1+2\alpha_3-2\alpha_5}$&&&\\
\hline
\end{tabular}
\end{center}
\caption{Solutions of the Laplacian coflow in the balanced case}
\label{tabla-balanced}
\end{table}

\section*{Acknowledgments}
\noindent
The authors would like to thank Anna Fino, Jorge Lauret and Luis Ugarte for useful comments on the subject.  
 This work has been partially supported by the projects MTM2017-85649-P (AEI/Feder, UE), and E22-17R ``\'Algebra y Geometr\'{\i}a'' (Gobierno de Arag\'on/FEDER)



\begin{thebibliography}{33}



\bibitem{BFF}
L. Bagaglini, M. Fern\'andez, A. Fino, Coclosed $\mathrm{G}_2$-structures inducing nilsolitons. \emph{Forum Math.}, \textbf{30} (1), 109-128, 2018.
%
\bibitem{Bo}
T. Bouche, The coeffective cohomology of a symplectic manifold. \emph{Bull. Sci. Math.}, \textbf{114}, 115-122, 1990.
%
\bibitem{Br}
R.L. Bryant,
Some remarks on $\mathrm{G}_2$ structures,
Proceedings of G\"{o}kova Geometry-Topology Conference 2005,
75--109, G\"{o}kova Geometry/Topology Conference (GGT), G\"{o}kova, 2006.
%
\bibitem{BV}
L. Bedulli, L. Vezzoni,
The Ricci tensor of $\mathrm{SU}(3)$-manifolds,
\emph{J. Geom. Phys.} \textbf{57} (2007), 1125--1146.
%
\bibitem{BX}
R.L. Bryant, F. Xu. Laplacian flow for closed $\mathrm{G}_2$-structures: Short time behavior. arXiv:1101.2004 [math.DG].
%
\bibitem{CS}
S. Chiossi, S. Salamon,
Intrinsic torsion of $\mathrm{SU}(3)$ and $\mathrm{G}_2$-structures,
\emph{Differential Geometry (Valencia, 2001)} 115--133, World Sci. Publ., River Edge, NJ (2002).
%
\bibitem{FG}
M. Fern\'andez, A. Gray,
Riemannian manifolds with structure group $\mathrm{G}_2$,
\emph{Ann. Mat. Pura Appl.} \textbf{132} (1982), 19--45.
%
\bibitem{FFM}
M. Fern\'andez, A. Fino, V. Manero. Laplacian flow of closed $\mathrm{G}_2$-structures inducing nilsolitons. \emph{J. Geom. Anal.,} \textbf{26} (3), 1808--1837, 2016.
%
\bibitem{FFR}
M. Fern\'andez, A. Fino, A. Raffero. On $\mathrm{G}_2$-structures, special metrics and related flows, arXiv:1810.07587v1 [math.DG]. To appear in a forthcoming volume of the Fields Institute Communications, entitled ``Lectures and Surveys on G$_2$ manifolds and related topics''.
%
\bibitem{FMOU} M. Fern\'andez, V. Manero, A. Otal, L. Ugarte, Symplectic half-flat solvmanifolds, \emph{Ann. Glob. Anal. Geom}. \textbf{43} (2013), 367--383.
%
\bibitem{FR} A. Fino, A. Raffero, Closed warped $\mathrm{G}_2$-structures evolving under the Laplacian flow, arXiv:1708.00222 [math.DG]. To appear in \emph{Annali della Scuola Normale Superiore di Pisa.} 
%
\bibitem{FosHan}
L. Foscolo, M. Haskins,
New $\mathrm{G}_2$-holonomy cones and exotic nearly K\"ahler structures on $S^6$ and $S^3 \times S^3$,
\emph{Ann. of Math.}  \textbf{185} (2017), 59--130.
%
\bibitem{G} S. Grigorian, Short-time behavior of a modified Laplacian coflow of $\mathrm{G}_2$-structures. \emph{Adv. Math.} \textbf{248}  (2013), 378--415.
%
\bibitem{Hi}
N. Hitchin,
The geometry of three-forms in six dimensions,
\emph{J. Differential Geom.} \textbf{55} (2000), 547--576.
%
\bibitem{KMT} S. Karigiannis, B. McKay, M.P. Tsui, Soliton solutions for the Laplacian coflow of some G2-structures with symmetry, \emph{Diff. Geom. Appl.} 30 (2012), 318-333.
\bibitem{Lo}
J. D. Lotay, Geometric flows on $\mathrm{G}_2$-structures, arXiv:1810.13417 [math.DG]. To appear in a forthcoming volume of the Fields Institute Communications, entitled ``Lectures and Surveys on G$_2$ manifolds and related topics''.
\bibitem{LW1}
J. D. Lotay, Y. Wei, Laplacian flow for closed $\mathrm{G}_2$-structures: Shi-type estimates, uniqueness and compactness. \emph{Geom. Funct. Anal}, 27(1), (2017) 165--233.
\bibitem{LW2}
J. D. Lotay, Y. Wei, Stability of torsion free $\mathrm{G}_2$ structures along the Laplacian flow, arXiv:1504.07771. To appear in J. Diff. Geom.
\bibitem{LW3}
J. D. Lotay, Y. Wei, Laplacian flow for closed $\mathrm{G}_2$-structures: real analyticity, arXiv:1601.04258. To appear in Comm. Anal. Geom.
%
\bibitem{MU} V. Manero, L. Ugarte, Einstein warped $\mathrm{G}_2$ and $Spin(7)$ manifolds, arXiv:1805.05858. To appear in Commun. Math. Phys.
%
\bibitem{Sc} F. Schulte-Hengesbach,
\emph{Half-flat Structures on Lie Groups}, Ph.D. Thesis, Hamburg (2010).
%
\bibitem{UV}
L. Ugarte, R. Villacampa, Balanced Hermitian geometry on $6$-dimensional nilmanifolds, {\em Forum Math.} \textbf{27} (2015), no. 2, 1025--1070.
%
\bibitem{XY}
F.~ Xu, R.~Ye, Existence, convergence and limit map of the Laplacian flow;
arXiv:0912.0074 [math.DG].


\end{thebibliography}
\end{document}